\newtheorem{theorem}{Theorem}[section]
\newtheorem{lemma}[theorem]{Lemma}
\newtheorem{corollary}[theorem]{Corollary}
\newtheorem{fact}[theorem]{Fact}
\theoremstyle{definition}
\newtheorem{definition}[theorem]{Definition}
\newtheorem{example}[theorem]{Example}
\theoremstyle{remark}
\newtheorem{remark}[theorem]{Remark}
\setlist[enumerate,1]{label=(\arabic*)}
\setlist[enumerate,2]{label=(\roman*)}
\numberwithin{equation}{section}
\numberwithin{figure}{section}
\numberwithin{table}{section}
\title{Inequality on the optimal constant of Young's convolution inequality for locally compact groups and their closed subgroups}
\author{Takashi Satomi}
\date{\today} 
\begin{document}

\maketitle

\begin{abstract}

  We define the optimal constant $Y ( p_1 , p_2 ; G )$ of Young's convolution inequality as
  \begin{align}
    Y ( p_1 , p_2 ; G ) := \sup \{ \| \phi_1 * ( \phi_2 \Delta^{1 / p_1'} ) \|_p \mid \phi_1 , \phi_2 \colon G \to \mathbb{C} , \; \| \phi_1 \|_{p_1} = \| \phi_2 \|_{p_2} = 1 \}
  \end{align}
  for a locally compact group $G$ and $1 \leq p_1 , p_2 , p \leq \infty$ with $1 / p_1 + 1 / p_2 = 1 + 1 / p$.
  Here $p'$ is the H\"{o}lder conjugate of $p$, $\| \cdot \|_{ p }$ is the $L^p$-norm on a left Haar measure, and $\Delta \colon G \to \mathbb{R}_{> 0}$ is the modular function.
  The main result of this paper is that $Y ( p_1 , p_2 ; G ) \leq Y ( p_1 , p_2 ; H )$ for any closed subgroup $H \subset G$.
  It follows from this inequality that $Y ( p_1 , p_2 ; G ) \leq Y ( p_1 , p_2 ; \mathbb{R} )^{ \dim G - r ( G ) }$ for any connected Lie group $G$ such that the center of the semisimple part is a finite group such as connected linear Lie groups and connected solvable Lie groups, where $r ( G )$ is the dimension of the maximal compact subgroups of $G$.

\end{abstract}

\noindent
\textbf{Keywords:} locally compact group, Lie group, closed subgroup, $L^p$-space, Young's convolution inequality, maximal compact subgroup.

\noindent
\textbf{MSC2020:} Primary 46E30; Secondary 22D15, 28C10, 39B62, 43A15.

\section{Introduction}

We write $Y ( p_1 , p_2 ; G )$ for the optimal constant (the optimal ratio of both sides) of Young's convolution inequality on a locally compact group $G$.
The main result of this paper is that $Y ( p_1 , p_2 ; G ) \leq Y ( p_1 , p_2 ; H )$ holds for any closed subgroup $H \subset G$ (Theorem \ref{thm:Young-order-reversing}).
As a corollary, we generalize the estimate of $Y ( p_1 , p_2 ; G )$ from above by Beckner \cite{MR385456}, Fournier \cite{MR461034}, Klein--Russo \cite{MR499945}, and Nielsen \cite{MR1304346} to any connected Lie group $G$ such that the center of the semisimple part is a finite group such as connected linear Lie groups and connected solvable Lie groups.
That is, we have $Y ( p_1 , p_2 ; G ) \leq Y ( p_1 , p_2 ; \mathbb{R} )^{ \dim G - r ( G ) }$, where $r ( G )$ is the dimension of the maximal compact subgroups of $G$ (Corollary \ref{cor:Young-non-compact-dimension}).

We set the H\"{o}lder conjugate $1 \leq p' \leq \infty$ of $1 \leq p \leq \infty$ as
\begin{align}
  \frac{1}{p} + \frac{1}{p'} = 1.
\end{align}
The $L^p$-norm with respect to the left Haar measure $dg$ on the locally compact group $G$ is written as $\| \cdot \|_p$.
We define the convolution $\phi_1 * \phi_2$ of measurable functions $\phi_1 , \phi_2 \colon G \to \mathbb{C}$ as
\begin{align}
  \phi_1 * \phi_2 ( g' )
  := \int_{G}^{} \phi_1 ( g ) \phi_2 ( g^{-1} g' ) dg.
\end{align}
In addition, there exists a unique continuous homomorphism $\Delta \colon G \to \mathbb{R}_{> 0}$ such that
\begin{align}
  \int_{G}^{} \phi ( g^{-1} ) dg
  = \int_{G}^{} \frac{\phi ( g )}{\Delta ( g )} dg. \label{eq:modular-function}
\end{align}
This $\Delta$ is called the modular function of $G$.
Then the notion of the optimal constant $Y ( p_1 , p_2 ; G )$ is defined as follows.

\begin{definition}[{\cite[Section 2]{MR499945} \cite{MR1304346}}]
  \label{def:optimal-constant}

  For any locally compact group $G$ and any $1 \leq p_1 , p_2 \leq \infty$ with
  \begin{align}
    \frac{1}{p_1} + \frac{1}{p_2} \geq 1, \label{eq:optimal-constant-p1-p2-condition}
  \end{align}
  the optimal constant $Y ( p_1 , p_2 ; G )$ is defined as
  \begin{align}
    Y ( p_1 , p_2 ; G ) := \sup \{ \| \phi_1 * ( \phi_2 \Delta^{1/p_1'} ) \|_p \mid \phi_1 , \phi_2 \colon G \to \mathbb{C} , \; \| \phi_1 \|_{p_1} = \| \phi_2 \|_{p_2} = 1 \}.
  \end{align}
  Here $\Delta$ is the modular function of $G$, and $p$ is given by
  \begin{align}
    \frac{1}{p_1} + \frac{1}{p_2}
    = 1 + \frac{1}{p}. \label{eq:optimal-constant-p-definition}
  \end{align}

\end{definition}

The main result of this paper is the following theorem.

\begin{theorem}
  \label{thm:Young-order-reversing}

  Suppose that $1 \leq p_1 , p_2 \leq \infty$ satisfy \eqref{eq:optimal-constant-p1-p2-condition}.
  Then $Y ( p_1 , p_2 ; G ) \leq Y ( p_1 , p_2 ; H )$ holds for any closed subgroup $H \subset G$ of any locally compact group $G$.

\end{theorem}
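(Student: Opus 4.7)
The plan is to take an arbitrary pair $\phi_1, \phi_2$ on $G$ nearly achieving the supremum defining $Y(p_1, p_2; G)$ and bound the corresponding ratio by $Y(p_1, p_2; H)$. The natural tool to relate convolution on $G$ to convolution on $H$ is the Weil integration formula, which expresses the Haar integral on $G$ as an iterated integral
\begin{align}
  \int_G f(g) \, dg = \int_{G/H} \int_H f(\sigma(x) h) \, dh \, d\mu(x)
\end{align}
(up to a rho-function correction absorbing the discrepancy between $\Delta_G|_H$ and $\Delta_H$), where $\sigma$ is a Borel section of $G \to G/H$ and $\mu$ is a quasi-invariant measure on $G/H$. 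Using this, the $L^{p_i}$-norm of $\phi_i$ on $G$ can be written as an iterated $L^{p_i}$-norm of the slices $\phi_i^x(h) := \phi_i(\sigma(x) h)$, which live in $L^{p_i}(H)$ for almost every $x \in G/H$.

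The main step is to fix $g \in G$, write $g = \sigma(y) k$, and expand $(\phi_1 * (\phi_2 \Delta^{1/p_1'}))(g)$ via Weil's formula on the integration variable $g' = \sigma(x) h'$. The integrand $\phi_1(g') \phi_2(g'^{-1} g) \Delta(g'^{-1} g)^{1/p_1'}$ reorganizes, for each fixed $x, y$, into a genuine convolution on $H$ between a slice of $\phi_1$ and a translate of a slice of $\phi_2$, with the $\Delta^{1/p_1'}$-weight exactly compensating the rho-function factors generated by the section choice. Applying Young's inequality on $H$ with constant $Y(p_1, p_2; H)$ to this inner convolution, and then Minkowski's integral inequality to pull the $L^p(H)$-norm past the $G/H$-integral, reduces the problem to bounding an $L^p$-type expression on $G/H$ in the scalar functions $x \mapsto \|\phi_i^x\|_{L^{p_i}(H)}$. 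A H\"older/Minkowski argument with trivial constant $1$ then controls this transverse piece by $\|\phi_1\|_{L^{p_1}(G)} \|\phi_2\|_{L^{p_2}(G)}$.

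The main obstacle I anticipate is the non-normality of $H$. When $H \trianglelefteq G$ the product $\sigma(x_1) h_1 \cdot \sigma(x_2) h_2$ reorganizes cleanly into (section)(subgroup) form, and the argument above essentially yields the stronger inequality $Y(p_1, p_2; G) \leq Y(p_1, p_2; H) \cdot Y(p_1, p_2; G/H) \leq Y(p_1, p_2; H)$. For general closed $H$, however, the product involves a section cocycle, the slices $\phi_i^x$ do not simply convolve under the $G$-operation, and one must verify that the section twisting, the rho-function accounting for $\Delta_G|_H / \Delta_H$, and the $\Delta^{1/p_1'}$-weight all align so that the inner integral is still a convolution on $H$ to which the $H$-Young constant applies. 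Keeping this bookkeeping straight, while ensuring that the transverse factor contributes no more than $1$, is the technical heart of the argument.
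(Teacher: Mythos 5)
Your proposal is correct and follows essentially the same route as the paper: the Weil/rho-function formula you invoke is Lemma \ref{lem:subgroup-measure}, the slicing of $\phi_1,\phi_2$ along $H$-cosets is Example \ref{ex:subgroup-represent}, and the final chain (Minkowski to move the $L^p(H)$-norm inside the $X$-integral, the constant $Y(p_1,p_2;H)$ on the inner $H$-convolution, then H\"older over $X$ with constant $1$) is exactly Subsection \ref{subsec:Young-order-proof-Young-order-proof}. The one step your sketch compresses --- and which is indeed the ``technical heart'' you flag --- is that the transverse bound is not a single H\"older application but a generalized Young/Schur estimate on the non-group space $X$, which forces the $\phi_2$-slice to be split into the two factors $t$ and $u$ with exponents $1/p$ and $1/p_1'$ and \emph{different} modular weights, so that its $p_2$-th power has both marginals over $X$ equal to $\| \phi_2 \|_{p_2}^{p_2}$.
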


When $H$ is a normal subgroup of $G$, Theorem \ref{thm:Young-order-reversing} is essentially known.
That is, Cowling--Martini--M\"{u}ller--Parcet proved
\begin{align}
  Y ( p_1 , p_2 ; G )
  \leq Y ( p_1 , p_2 ; H ) Y ( p_1 , p_2 ; G / H ) \label{eq:Young-product-normal}
\end{align}
\cite[Proposition 2.2]{MR4000236}.
Since $Y ( p_1 , p_2 ; G / H ) \leq 1$ by the classical Young's inequality (Example \ref{ex:Young-order-reversing-example} \ref{item:Young-order-reversing-example-classic}), Theorem \ref{thm:Young-order-reversing} follows from \eqref{eq:Young-product-normal}.
The inequality \eqref{eq:Young-product-normal} was essentially proved by Beckner when $G$ is abelian \cite[Section IV.5]{MR385456} (see also Fact \ref{fact:Beckner}), and by Klein--Russo when $G$ is a semidirect product of $H$ and $G / H$ \cite[Lemma 2.4]{MR499945}.

Theorem \ref{thm:Young-order-reversing} has some interesting examples.
For instance, we have the classical Young's inequality by applying Theorem \ref{thm:Young-order-reversing} to the case where $H$ is trivial (Example \ref{ex:Young-order-reversing-example} \ref{item:Young-order-reversing-example-classic}).
In addition, if the identity component $G_0 \subset G$ is open (e.g. $G$ is a Lie group), then we have $Y ( p_1 , p_2 ; G ) = Y ( p_1 , p_2 ; G_0 )$ (Example \ref{ex:Young-order-reversing-example} \ref{item:Young-order-reversing-example-component}) by Theorem \ref{thm:Young-order-reversing}.
Thus, it suffices to consider the identity component to determine the value of $Y ( p_1 , p_2 ; G )$ for any Lie group $G$.
In addition, we have $Y ( p_1 , p_2 ; G ) \leq Y ( p_1 , p_2 ; \mathbb{R} )$ for any locally compact group $G$ which has no open compact subgroup by Theorem \ref{thm:Young-order-reversing}.
This claim is a generalization of the previous results of Fournier, Nielsen, and the author (Corollary \ref{cor:Young-R-compare}).

As a corollary of Theorem \ref{thm:Young-order-reversing}, we bound $Y ( p_1 , p_2 ; G )$ for any connected Lie group $G$ such that the center of the semisimple part is a finite group such as connected linear Lie groups and connected solvable Lie groups by using the dimension $r ( G )$ of the maximal compact subgroups.
That is, the following corollary holds, where $\# G$ is the cardinality of a group $G$, and $Z ( G )$ is the center of $G$.

\begin{corollary}
  \label{cor:Young-non-compact-dimension}

  Let $R \lhd G$ be the radical (the largest connected solvable closed normal subgroup) of a connected Lie group $G$, and $r ( G )$ be the dimension of the maximal compact subgroups of $G$.
  If $\# Z ( G / R ) < \infty$, then
  \begin{align}
    Y ( p_1 , p_2 ; G ) \leq Y ( p_1 , p_2 ; \mathbb{R} )^{\dim G - r ( G )} \label{eq:Young-non-compact-dimension-state}
  \end{align}
  holds for any $1 \leq p_1 , p_2 \leq \infty$ with \eqref{eq:optimal-constant-p1-p2-condition}.

\end{corollary}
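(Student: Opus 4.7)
The plan is to combine Theorem \ref{thm:Young-order-reversing} with the product inequality \eqref{eq:Young-product-normal} so as to reduce the bound on $G$ to bounds on two solvable pieces---the radical $R$ and the Iwasawa $AN$-part of $G/R$---and then to treat each solvable piece by iterating \eqref{eq:Young-product-normal} along a composition series whose one-dimensional quotients are $\mathbb{R}$ or $S^{1}$.

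First I pass from $G$ to a closed solvable subgroup. Since $\#Z(G/R) < \infty$, the semisimple group $G/R$ admits an Iwasawa decomposition $G/R = K_0 A_0 N_0$ in which $A_0 N_0$ is a closed simply connected solvable subgroup of dimension $\dim(G/R) - r(G/R)$. Setting $\tilde S := \pi^{-1}(A_0 N_0)$ for the quotient map $\pi \colon G \to G/R$ yields a closed subgroup of $G$ containing $R$ as a closed normal subgroup, with $\tilde S / R \cong A_0 N_0$. Theorem \ref{thm:Young-order-reversing} gives $Y(p_1, p_2; G) \leq Y(p_1, p_2; \tilde S)$, and \eqref{eq:Young-product-normal} applied to $R \lhd \tilde S$ then gives $Y(p_1, p_2; \tilde S) \leq Y(p_1, p_2; R) \cdot Y(p_1, p_2; A_0 N_0)$.

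To estimate a solvable factor $S$ (applied to $S = R$ and to $S = A_0 N_0$), I would show $Y(p_1, p_2; S) \leq Y(p_1, p_2; \mathbb{R})^{\dim S - r(S)}$ by refining the derived series $S = S^{(0)} \supset S^{(1)} \supset \cdots \supset S^{(m)} = \{e\}$ to a chain of closed subgroups $S = L_0 \supset L_1 \supset \cdots \supset L_{\dim S} = \{e\}$ with $L_{j+1} \lhd L_j$ and each quotient $L_j / L_{j+1}$ isomorphic to $\mathbb{R}$ or $S^{1}$. Such a refinement is possible because each abelian quotient $S^{(i)}/S^{(i+1)} \cong \mathbb{R}^{a_i} \times T^{b_i}$ can be refined through its own closed subgroups, and these refinements pull back to subgroups of $S^{(i)}$ that are normal in $S^{(i)}$ (since $S^{(i)}$ acts trivially on its own abelian quotient). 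Iterating \eqref{eq:Young-product-normal} along this chain and using $Y(p_1, p_2; S^{1}) = 1$ (compact group) gives $Y(p_1, p_2; S) \leq Y(p_1, p_2; \mathbb{R})^{\sum_i a_i}$; the classical additivity $r(S) = r(S^{(1)}) + r(S/S^{(1)})$ for connected solvable Lie groups then gives, inductively on derived length, $\sum_i b_i = r(S)$ and hence $\sum_i a_i = \dim S - r(S)$.

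Assembling all the estimates, the total exponent of $Y(p_1, p_2; \mathbb{R})$ becomes $(\dim R - r(R)) + (\dim(G/R) - r(G/R)) = \dim G - (r(R) + r(G/R))$, and the corollary follows from the identity $r(G) = r(R) + r(G/R)$, which under $\#Z(G/R) < \infty$ is a classical consequence of the Iwasawa--Mostow structure theorem: the maximal compact subgroup of $G$ intersects $R$ in a maximal compact of $R$ and surjects onto a maximal compact of $G/R$. The main subtlety lies in the two additivity identities for $r(\cdot)$---one along the derived series of a connected solvable Lie group, the other across $R \lhd G$ under the finite-center hypothesis---which must be sourced cleanly from Lie-theoretic structure theory; granted these, the proof is a clean combination of Theorem \ref{thm:Young-order-reversing} with the product inequality \eqref{eq:Young-product-normal}.
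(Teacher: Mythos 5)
Your argument is correct, and it assembles the same ingredients as the paper---Theorem \ref{thm:Young-order-reversing}, the product inequality \eqref{eq:Young-product-normal}, the Iwasawa decomposition of the finite-center quotient $G/R$, and the additivity $r(G)=r(H)+r(G/H)$ (Lemma \ref{lem:non-compact-dimension-deduce} \ref{item:non-compact-dimension-deduce-normal})---but in a different order and without the paper's abstraction. The paper packages the bookkeeping into Lemma \ref{lem:non-compact-dimension-bound}, which applies to any subgroup-monotone, superadditive functional $d$ and is proved by induction on $\dim G$, alternating between splitting off a codimension-one connected closed normal subgroup (solvable case) and passing to the closed subgroup $AN$ (semisimple case). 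You instead perform the entire reduction in one pass: restrict to $\pi^{-1}(A_0N_0)$ via Theorem \ref{thm:Young-order-reversing}, peel off $R$ via \eqref{eq:Young-product-normal}, and then iterate \eqref{eq:Young-product-normal} along chains with one-dimensional quotients in each of the two solvable pieces. This avoids the dimension induction and the abstract lemma; what it gives up is the reusability of that lemma for other superadditive functionals (the Brunn--Minkowski setting of Jing--Tran--Zhang that motivated the paper's formulation). The final exponent count $(\dim R - r(R)) + (\dim(G/R)-r(G/R)) = \dim G - r(G)$ is right, using $r(A_0N_0)=0$ and the fact that $K_0$ is a maximal compact subgroup precisely because $\# Z(G/R)<\infty$.

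One technical point needs repair. The derived subgroups $S^{(i)}$ of a connected solvable Lie group need not be closed: the commutator subgroup can wind densely in a central torus (this already happens for suitable quotients of $H_3\times\mathbb{R}$, $H_3$ the Heisenberg group, by a rank-two central lattice), so one cannot literally ``refine the derived series'' by closed subgroups as stated. The fix is to build the chain directly from codimension-one connected closed normal subgroups, as in Lemma \ref{lem:non-compact-dimension-deduce} \ref{item:non-compact-dimension-deduce-exist-solvable} (equivalently, refine through the closures $\overline{S^{(i)}}$, which are proper closed connected normal subgroups with abelian quotients of positive dimension). The successive quotients are still isomorphic to $\mathbb{R}$ or $\mathbb{R}/\mathbb{Z}$, and the number of circle factors equals $r(S)$ by telescoping Lemma \ref{lem:non-compact-dimension-deduce} \ref{item:non-compact-dimension-deduce-normal} along the chain, so the rest of your computation is unaffected.
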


Corollary \ref{cor:Young-non-compact-dimension} bounds $Y ( p_1 , p_2 ; G)$ explicitly from above because $Y ( p_1 , p_2 ; \mathbb{R} )$ is determined explicitly by Beckner (Fact \ref{fact:Beckner}).
Although Corollary \ref{cor:Young-non-compact-dimension} is known for some $G$ (Table \ref{tab:Young-non-compact-dimension-compare}), to the best of our knowledge, it contains some new examples such as
\begin{align}
  Y ( p_1 , p_2 ; SL_2 (\mathbb{R}) )
  \leq Y ( p_1 , p_2 ; \mathbb{R} )^2. \label{eq:SL2-bound}
\end{align}
It is known that the equality of \eqref{eq:Young-non-compact-dimension-state} holds for some connected Lie groups such as connected compact Lie groups (Corollary \ref{cor:Young-R-compare}) and connected nilpotent Lie groups (Fact \ref{fact:Nielsen}).
We prove Corollary \ref{cor:Young-non-compact-dimension} in Section \ref{sec:non-compact-dimension} by using an argument similar to the generalization of the Brunn--Minkowski inequality to any Lie group by Jing--Tran--Zhang \cite[Theorem 1.1]{MR4616694}.

Here is the organization of this paper.
In Section \ref{sec:Young-known-result}, we compare some known results with Theorem \ref{thm:Young-order-reversing} and Corollary \ref{cor:Young-non-compact-dimension}.
In Section \ref{sec:boundary}, we show Theorem \ref{thm:Young-order-reversing} when $( p_1 , p_2 )$ is on the boundary of the range that satisfies the assumption.
In Section \ref{sec:Young-order-proof}, we show Theorem \ref{thm:Young-order-reversing} when $( p_1 , p_2 )$ is not on the boundary.
In Section \ref{sec:non-compact-dimension}, we show Corollary \ref{cor:Young-non-compact-dimension} by using Theorem \ref{thm:Young-order-reversing} and the argument of Jing--Tran--Zhang.

\section{Comparison of some known results on the optimal constant}
\label{sec:Young-known-result}

In this section, we compare some known results on the optimal constant $Y ( p_1 , p_2 ; G )$ with Theorem \ref{thm:Young-order-reversing} (Subsection \ref{subsec:Young-known-result-order}) and Corollary \ref{cor:Young-non-compact-dimension} (Subsection \ref{subsec:Young-known-result-non-compact-dimension}).

\subsection{Comparison with Theorem \ref{thm:Young-order-reversing}}
\label{subsec:Young-known-result-order}

In this subsection, we see some relations between known results and Theorem \ref{thm:Young-order-reversing}.
Here are some examples of Theorem \ref{thm:Young-order-reversing}.

\begin{example}
  \label{ex:Young-order-reversing-example}

  \begin{enumerate}
    \item \label{item:Young-order-reversing-example-classic}
          Since the trivial group $\{ e \} \subset G$ containing only the identity element $e \in G$ is a closed subgroup, we have $Y ( p_1 , p_2 ; G ) \leq Y ( p_1 , p_2 ; \{ e \} )$ by Theorem \ref{thm:Young-order-reversing}.
          The equality $Y ( p_1 , p_2 ; \{ e \} ) = 1$ holds by definition and hence we have the classical Young's inequality
          \begin{align}
            Y ( p_1 , p_2 ; G ) \leq 1. \label{eq:Young-order-reversing-example-classic-state}
          \end{align}
          There are at least two proofs of \eqref{eq:Young-order-reversing-example-classic-state}.

          \begin{enumerate}

            \item \label{item:Young-order-reversing-example-classic-Riesz-Thorin}
                  The method to deduce the case of $p_1 = 1 , p_2'$ by using the Riesz--Thorin theorem.

            \item \label{item:Young-order-reversing-example-classic-Holder}
                  The direct method to use H\"{o}lder's inequality repeatedly.

          \end{enumerate}

          These proofs can be found in some literature listed in Table \ref{tab:classical-Young}.
          Terp indicated that one can prove \eqref{eq:Young-order-reversing-example-classic-state} by the method \ref{item:Young-order-reversing-example-classic-Holder} even when $G$ is not unimodular, but there is no explicit proof in this paper.
          By using \eqref{eq:Young-order-reversing-example-classic-state}, Terp generalized the Hausdorff--Young inequality to any locally compact group \cite[Theorem 5.2]{MR3730047}.
          The proof of Theorem \ref{thm:Young-order-reversing} in Section \ref{sec:Young-order-proof} can be regarded as a generalization of the method \ref{item:Young-order-reversing-example-classic-Holder}.

          \begin{table}
            \centering
            \caption{Some literature in which the proof of \eqref{eq:Young-order-reversing-example-classic-state} is mentioned}
            \label{tab:classical-Young}

            \vspace{8pt}

            \begin{tabular}{c||c|c}
                                                                            & $G$: unimodular                            & $G$: general                            \\
              \hline \hline
              \ref{item:Young-order-reversing-example-classic-Riesz-Thorin} & Weil \cite{MR0005741}                      & Klein--Russo \cite[Lemma 2.1]{MR499945} \\
              \hline
              \ref{item:Young-order-reversing-example-classic-Holder}       & Hewitt--Ross \cite[Theorem 20.18]{MR551496} & (Terp \cite[Lemma 1.1]{MR3730047})
            \end{tabular}
          \end{table}

    \item \label{item:Young-order-reversing-example-component}
          We have $Y ( p_1 , p_2 ; G ) \leq Y ( p_1 , p_2 ; G_0 )$ by Theorem \ref{thm:Young-order-reversing}, where $G_0 \subset G$ is the identity component.
          If $G_0$ is open (e.g. $G$ is a Lie group), then a Haar measure on $G_0$ corresponds to that on $G$ and hence $Y ( p_1 , p_2 ; G ) \geq Y ( p_1 , p_2 ; G_0 )$.
          Thus, we obtain $Y ( p_1 , p_2 ; G ) = Y ( p_1 , p_2 ; G_0 )$ when $G_0$ is open.

  \end{enumerate}

\end{example}

By using Theorem \ref{thm:Young-order-reversing}, we obtain necessary and sufficient conditions for satisfying $Y ( p_1 , p_2 ; G ) \leq Y ( p_1 , p_2 ; \mathbb{R} )$ as follows.

\begin{corollary}
  \label{cor:Young-R-compare}

  The following conditions \ref{item:Young-R-compare-open-compact}-\ref{item:Young-R-compare-not-1-G0} on the locally compact group $G$ are equivalent for any $1 < p_1 , p_2 < \infty$ with
  \begin{align}
    \frac{1}{p_1} + \frac{1}{p_2} > 1. \label{eq:Young-R-compare-condition}
  \end{align}

  \begin{enumerate}
    \item \label{item:Young-R-compare-open-compact}
          The locally compact group $G$ has no open compact subgroup.

    \item \label{item:Young-R-compare-G0-non-compact}
          The identity component $G_0 \subset G$ is not compact.

    \item \label{item:Young-R-compare-subgroup-G}
          The locally compact group $G$ has a closed subgroup which is isomorphic to $\mathbb{R}$ as a topological group.

    \item \label{item:Young-R-compare-subgroup-G0}
          The identity component $G_0$ has a closed subgroup which is isomorphic to $\mathbb{R}$ as a topological group.

    \item \label{item:Young-R-compare-less-G}
          One has $Y ( p_1 , p_2 ; G ) \leq Y ( p_1 , p_2 ; \mathbb{R} )$.

    \item \label{item:Young-R-compare-less-G0}
          One has $Y ( p_1 , p_2 ; G_0 ) \leq Y ( p_1 , p_2 ; \mathbb{R} )$.

    \item \label{item:Young-R-compare-not-1-G}
          One has $Y ( p_1 , p_2 ; G ) \neq 1$.

    \item \label{item:Young-R-compare-not-1-G0}
          One has $Y ( p_1 , p_2 ; G_0 ) \neq 1$.

  \end{enumerate}

\end{corollary}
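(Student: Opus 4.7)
The plan is to close a cycle of implications passing through all eight conditions, with three distinct ingredients: elementary topology and structure theory of locally compact groups (for the equivalences among (1)--(4)), Theorem \ref{thm:Young-order-reversing} applied with $H=\mathbb{R}$ or $H=G_0$ (for (3)$\Rightarrow$(5) and (4)$\Rightarrow$(6)), Beckner's explicit formula (Fact \ref{fact:Beckner}) providing the strict bound $Y(p_1,p_2;\mathbb{R})<1$ under the strict inequality \eqref{eq:Young-R-compare-condition} (which forces $1<p<\infty$ as well), and a direct indicator-function witness showing that an open compact subgroup forces the optimal constant to equal one.

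First I would handle the topological block. The equivalence (1)$\Leftrightarrow$(2) is standard: an open compact subgroup of $G$ is clopen and contains $e$, hence contains the connected set $G_0$, which then lies in a compact set and is itself compact; conversely, if $G_0$ is compact then $G/G_0$ is totally disconnected and locally compact, so van Dantzig's theorem yields a compact open subgroup there whose preimage in $G$ is an open compact subgroup. The equivalence (3)$\Leftrightarrow$(4) is immediate from the connectedness of $\mathbb{R}$: any closed copy of $\mathbb{R}$ in $G$ lies in $G_0$ and remains closed there, while any closed subgroup of $G_0$ remains closed in $G$. The direction (4)$\Rightarrow$(2) is trivial, and the reverse (2)$\Rightarrow$(4) is the main technical step: by the Gleason--Yamabe theorem, $G_0$ is an inverse limit of connected Lie groups modulo compact normal subgroups, so some quotient $L$ is a non-compact connected Lie group; and $L$ admits a closed subgroup isomorphic to $\mathbb{R}$ by the Iwasawa decomposition of the semisimple Levi factor when that factor is non-compact, and by the structure theory of non-compact connected solvable Lie groups otherwise. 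One then lifts this $\mathbb{R}$ through the compact-kernel quotient, using that any extension of $\mathbb{R}$ by a compact group splits continuously, to obtain a closed $\mathbb{R}\subset G_0$.

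I would then pass to the analytic conditions by two applications of Theorem \ref{thm:Young-order-reversing}: with $H=\mathbb{R}\subset G$ closed, (3) gives (5); with $H=\mathbb{R}\subset G_0$ closed, (4) gives (6). Beckner's formula furnishes the strict inequality $Y(p_1,p_2;\mathbb{R})<1$ under the hypothesis of the corollary, upgrading (5) to (7) and (6) to (8).

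To close the loop I would prove the contrapositives (7)$\Rightarrow$(1) and (8)$\Rightarrow$(2) by exhibiting extremisers on an open compact subgroup. If $K\subset G$ is open compact, then $\Delta\equiv 1$ on $K$ by compactness, and a direct computation yields $\mathbf{1}_K*\mathbf{1}_K=|K|\,\mathbf{1}_K$; taking $\phi_1,\phi_2$ to be $\mathbf{1}_K$ normalised to unit $L^{p_1}$- and $L^{p_2}$-norms realises the ratio $|K|^{1+1/p-1/p_1-1/p_2}=1$ in Definition \ref{def:optimal-constant}, so $Y(p_1,p_2;G)\geq 1$; combined with the classical Young inequality (Example \ref{ex:Young-order-reversing-example}\ref{item:Young-order-reversing-example-classic}) this forces equality, contradicting (7). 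The same argument applied inside $G_0$ proves (8)$\Rightarrow$(2). The main obstacle is the structural implication (2)$\Rightarrow$(4), and in particular the clean splitting of compact-kernel extensions of $\mathbb{R}$; every remaining step is a direct application of Theorem \ref{thm:Young-order-reversing}, Beckner's formula, or the indicator-function computation.
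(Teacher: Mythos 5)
Your proof is correct and follows essentially the same route as the paper: the same cycle of implications, with Theorem \ref{thm:Young-order-reversing} giving (3)$\Rightarrow$(5) and (4)$\Rightarrow$(6), Beckner's formula giving $Y(p_1,p_2;\mathbb{R})<1$, Gleason--Yamabe plus Iwasawa giving (2)$\Rightarrow$(4), and the compact-open-subgroup indicator computation giving (7)$\Rightarrow$(1) (the paper simply cites Nielsen for the latter and its predecessor/Hewitt--Ross for (1)$\Leftrightarrow$(2), where you argue directly via van Dantzig). The one step to tighten is your appeal to ``any extension of $\mathbb{R}$ by a compact group splits continuously'' in (2)$\Rightarrow$(4): this is true but nontrivial, and it is cleaner to do as the paper does and apply Iwasawa's Theorem 13 directly to the connected non-compact group $G_0$ (legitimate by Gleason--Yamabe), which already exhibits closed one-parameter subgroups isomorphic to $\mathbb{R}$ with no lifting needed.
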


When $G$ is unimodular, $\ref{item:Young-R-compare-open-compact} \Longleftrightarrow \ref{item:Young-R-compare-not-1-G}$ was proved by Fournier \cite[Theorems 1 and 3]{MR461034}, and $\ref{item:Young-R-compare-open-compact} \Longleftrightarrow \ref{item:Young-R-compare-G0-non-compact} \Longleftrightarrow \ref{item:Young-R-compare-less-G} \Longleftrightarrow \ref{item:Young-R-compare-less-G0} \Longleftrightarrow \ref{item:Young-R-compare-not-1-G} \Longleftrightarrow \ref{item:Young-R-compare-not-1-G0}$ was essentially proved in the previous paper of the author \cite[Corollary 1.3 and Remark 2.2]{MR4540843}.

When $G$ is not necessarily unimodular, $\ref{item:Young-R-compare-subgroup-G} \Longleftrightarrow \ref{item:Young-R-compare-subgroup-G0}$ follows from the connectedness of $\mathbb{R}$.
The results of Iwasawa \cite[Theorem 13]{MR29911} and the Gleason--Yamabe theorem \cite{MR39730} \cite[Theorem 5']{MR58607} imply $\ref{item:Young-R-compare-G0-non-compact} \Longrightarrow \ref{item:Young-R-compare-subgroup-G0}$.
Nielsen proved $\ref{item:Young-R-compare-open-compact} \Longleftrightarrow \ref{item:Young-R-compare-not-1-G}$ \cite[Theorem 1]{MR1304346}.
The author showed $\ref{item:Young-R-compare-open-compact} \Longleftrightarrow \ref{item:Young-R-compare-G0-non-compact}$ in the previous paper \cite[Remark 2.4 (3)]{MR4563397} by using the result of Hewitt--Ross \cite{MR551496}.
Theorem \ref{thm:Young-order-reversing} implies $\ref{item:Young-R-compare-subgroup-G0} \Longrightarrow \ref{item:Young-R-compare-less-G0} \Longrightarrow \ref{item:Young-R-compare-less-G}$.
Theorem \ref{thm:Young-order-reversing} and Example \ref{ex:Young-order-reversing-example} \ref{item:Young-order-reversing-example-classic} imply $\ref{item:Young-R-compare-not-1-G0} \Longrightarrow \ref{item:Young-R-compare-not-1-G}$.
The implications $\ref{item:Young-R-compare-less-G} \Longrightarrow \ref{item:Young-R-compare-not-1-G}$ and $\ref{item:Young-R-compare-less-G0} \Longrightarrow \ref{item:Young-R-compare-not-1-G0}$ are deduced from $Y ( p_1 , p_2 ; \mathbb{R} ) < 1$.
Beckner determined the value of $Y ( p_1 , p_2 ; \mathbb{R}^n )$ explicitly as the following fact and hence $Y ( p_1 , p_2 ; \mathbb{R} ) < 1$ is essentially obtained.

\begin{fact}[Beckner {\cite[Theorem 3]{MR385456}}]
  \label{fact:Beckner}

  Let $1 \leq p \leq \infty$ be as in \eqref{eq:optimal-constant-p-definition} for $1 \leq p_1 , p_2 \leq \infty$ with \eqref{eq:optimal-constant-p1-p2-condition}.
  Then
  \begin{align}
    Y ( p_1 , p_2 ; \mathbb{R}^n )
    = \left( \frac{B ( p_1 ) B ( p_2 )}{B ( p )} \right)^{n / 2}, &  &
    B ( p )
    :=
    \left\{
    \begin{aligned}
       & \frac{p^{1 / p}}{p'^{1 / p'}} &  & \text{if $1 < p < \infty$} \\
       & 1                             &  & \text{if $p = 1, \infty$}
    \end{aligned}
    \right.
  \end{align}
  holds for any $n \in \mathbb{Z}_{\geq 1}$.

\end{fact}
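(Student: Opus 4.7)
The plan is to follow Beckner's original strategy, which has three parts: a Gaussian lower bound, a tensor-product reduction to dimension one, and a sharp one-dimensional inequality obtained by a discrete-to-continuous limiting argument.

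\emph{Step 1 (Gaussian lower bound).} First I would test the defining supremum on the family $\phi_i ( x ) = \exp ( - a_i | x |^2 )$ with $a_i > 0$. The convolution of two centered isotropic Gaussians on $\mathbb{R}^n$ is again an isotropic Gaussian (times an explicit scalar depending on $a_1 , a_2$), so all three relevant $L^p$-norms reduce to elementary Gaussian integrals. Optimizing the resulting ratio in $( a_1 , a_2 )$ produces exactly the value $( B ( p_1 ) B ( p_2 ) / B ( p ) )^{n / 2}$, giving the lower bound and simultaneously identifying Gaussians as candidate extremizers.

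\emph{Step 2 (reduction to $n = 1$).} For the matching upper bound I would prove the tensorization $Y ( p_1 , p_2 ; \mathbb{R}^n ) \leq Y ( p_1 , p_2 ; \mathbb{R} )^n$. This is a special case of \eqref{eq:Young-product-normal} applied to the normal subgroup $\mathbb{R} \subset \mathbb{R}^n$ with quotient $\mathbb{R}^{n - 1}$ and then iterated in $n$; equivalently, it follows by splitting $\mathbb{R}^n = \mathbb{R}^{n-1} \times \mathbb{R}$, applying Minkowski's integral inequality in the first $n - 1$ coordinates to reduce to a 1D Young problem with Banach-valued integrands, and inducting on $n$. Combined with Step 1 specialized to $n = 1$, this reduces the whole fact to the single claim $Y ( p_1 , p_2 ; \mathbb{R} ) \leq B ( p_1 ) B ( p_2 ) / B ( p )$.

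\emph{Step 3 (one-dimensional sharp constant).} This is the main obstacle and the technical heart of Beckner's paper. The idea is a discrete-to-continuous argument: first one establishes the sharp two-point Young-type inequality on the multiplicative group $\{ -1 , +1 \}$ by a direct elementary optimization in one real variable; next one tensorizes, using the abelian case of \eqref{eq:Young-product-normal} and sharpness on the two-point group, to obtain the same sharp constant on $\{ -1 , +1 \}^N$; finally one passes to $\mathbb{R}$ via the central limit theorem applied to $N^{-1/2} \sum_{i = 1}^N \epsilon_i$ with independent uniform $\epsilon_i \in \{ -1 , +1 \}$, whose law converges to the standard Gaussian. The corresponding rescaled convolutions on $\{ -1 , +1 \}^N$ converge to convolution on $\mathbb{R}$, and the relevant $L^p$-norms pass to the limit, yielding the sharp 1D constant. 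The most delicate point is the uniform integrability estimate required for the $L^p$-norms to pass to the limit along Bernoulli rescalings; an alternative route that bypasses this would be to run a heat-semigroup deformation on $\phi_1 , \phi_2$ in the style of Carlen--Lieb--Loss, showing that the relevant Young ratio is monotone in the deformation parameter and tends to the Gaussian value computed in Step 1.
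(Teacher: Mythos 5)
This Fact is quoted from Beckner's paper and is not proved in the present article (the Remark following it only surveys the known proofs), so the comparison can only be with those surveyed strategies. Your Steps 1 and 2 are fine: the Gaussian computation gives the lower bound, and $Y(p_1,p_2;\mathbb{R}^n)\leq Y(p_1,p_2;\mathbb{R})^n$ follows from \eqref{eq:Young-product-normal} (or Minkowski's integral inequality) by induction. The problem is Step 3. The two-point/Bernoulli/central-limit argument is Beckner's proof of the sharp \emph{Hausdorff--Young} inequality, not of the sharp Young convolution inequality, and it does not transfer. Two structural obstructions: (a) for any finite (indeed any compact) group the optimal Young constant is $1$, attained by point masses, so the "sharp two-point Young inequality" you propose to tensorize carries no information beyond $Y\leq 1$, and no rescaling of a family of inequalities with constant exactly $1$ can produce the constant $B(p_1)B(p_2)/B(p)<1$; (b) the map $\epsilon\mapsto N^{-1/2}\sum_i\epsilon_i$ is not a homomorphism from $(\{-1,+1\}^N,\cdot)$ to $(\mathbb{R},+)$, and in fact for independent uniform $\epsilon,\epsilon'$ the triple $N^{-1/2}(\sum\epsilon_i,\sum\epsilon_i',\sum\epsilon_i\epsilon_i')$ converges to three \emph{independent} Gaussians, so the rescaled convolutions on the cube degenerate to a product of two integrals rather than converging to convolution on $\mathbb{R}$. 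The Fourier transform survives this limit because it intertwines with the Walsh--Hermite structure; convolution does not. So the "delicate point" is not uniform integrability — the scheme fails before that.

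What actually closes Step 3 in the literature is one of: Beckner's own route (Riesz--Sobolev rearrangement to reduce to rotation-invariant extremizers, then showing the only rotation-invariant solutions of $\psi_1(x_1)\psi_2(x_2)=\nu_1(x_1+x_2)\nu_2(x_1-x_2)$ are Gaussians); Brascamp--Lieb's tensor-power argument, which is a central limit theorem in the \emph{dimension} $n$ applied to radial functions on $\mathbb{R}^n$, not on the Bernoulli cube; Barthe's mass-transport proof; or the Carlen--Lieb--Loss heat-flow monotonicity you mention in passing. Your final sentence therefore names a viable repair, but as written it is a one-line fallback for a nonexistent technical issue rather than the actual argument; to make the proposal correct you would need to promote that heat-flow (or rearrangement-plus-functional-equation) argument to the main line of Step 3 and carry it out.
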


\begin{remark}

  There are some proofs of Fact \ref{fact:Beckner} (and its generalization named the Brascamp--Lieb inequality \cite[Theorem 1]{MR412366}).

  \begin{enumerate}
    \item
          Beckner reduced Fact \ref{fact:Beckner} to the problem of finding the integrable solutions of
          \begin{align}
            \psi_1 (x_1) \psi_2 (x_2) = \nu_1 (x_1 + x_2) \nu_2 (x_1 - x_2) \label{eq:independent}
          \end{align}
          by using the Minkowski integral inequality.
          Beckner proved the existence of rotation invariant extremal functions by using the Riesz--Sobolev rearrangement inequality, and proved Fact \ref{fact:Beckner} by showing that the rotation invariant solutions of \eqref{eq:independent} are only Gaussian functions.
          Lieb proved the Brascamp--Lieb inequality by showing implicitly that the solutions of \eqref{eq:independent} are only Gaussian functions (the Darmois--Skitovich theorem \cite{MR61322} \cite{MR0055597}) even when the function is not necessarily rotation invariant \cite[Theorem 6.2]{MR1069246}.

    \item
          Brascamp--Lieb proved Fact \ref{fact:Beckner} by showing $Y ( p_1 , p_2 ; \mathbb{R}^n ) = Y ( p_1 , p_2 ; \mathbb{R} )^n$ and bounding the behavior of $Y ( p_1 , p_2 ; \mathbb{R}^n )$ as $n \to \infty$ from above.
          In addition, Brascamp--Lieb generalized Fact \ref{fact:Beckner} to the Brascamp--Lieb inequality by a similar argument.

    \item
          Barthe gave a direct proof \cite[Theorem 1]{MR1616143} utilizing the change of variable by Henstock--Macbeath \cite[Section 5]{MR56669} and the weighted AM-GM inequality.
          In addition, Barthe proved that a similar argument is valid for the Brascamp--Lieb inequality \cite[Theorem 1]{MR1650312}.

    \item
          Carlen--Lieb--Loss proved the rank-one Brascamp--Lieb inequality by showing the property that the integral of the product of the exponentiations of the solutions of the heat equations is increasing with time \cite[Theorem 3.1]{MR2077162}.
          Cordero-Erausquin--Ledoux proved this property by using the estimate of Shannon's differential entropy \cite[Theorem 6]{MR2644890}.

  \end{enumerate}

  In addition, there are many works \cite{MR1008726} \cite{MR2377493} \cite{MR2448061} \cite{MR2496567} \cite{MR2661170} \cite{MR2674705} \cite{MR2806562} \cite{MR3364694} \cite{MR3239122} \cite{MR3431655} \cite{MR3723636} \cite{MR3610015} \cite{MR3777414} \cite{MR4173156} and surveys \cite{MR1898210} \cite{MR2657116} \cite{MR3204854} about Fact \ref{fact:Beckner} and the Brascamp--Lieb inequality.

\end{remark}

\subsection{Comparison with Corollary \ref{cor:Young-non-compact-dimension}}
\label{subsec:Young-known-result-non-compact-dimension}

In this section, we see some relations between known results and Corollary \ref{cor:Young-non-compact-dimension}.
Table \ref{tab:Young-non-compact-dimension-compare} lists the authors who proved Corollary \ref{cor:Young-non-compact-dimension} for some $G$.
If $G$ is compact, then $G$ is unimodular and Corollary \ref{cor:Young-non-compact-dimension} corresponds to \eqref{eq:Young-order-reversing-example-classic-state}.
Thus, Corollary \ref{cor:Young-non-compact-dimension} was essentially proved by Weil (Example \ref{ex:Young-order-reversing-example} \ref{item:Young-order-reversing-example-classic}).
The equivalent conditions \ref{item:Young-R-compare-open-compact}-\ref{item:Young-R-compare-not-1-G0} in Corollary \ref{cor:Young-R-compare} is also equivalent to $r ( G ) < \dim G$ for any connected Lie group $G$.
Thus, Corollary \ref{cor:Young-non-compact-dimension} gives a stronger bound than Corollary \ref{cor:Young-R-compare} of $Y ( p_1 , p_2 ; G )$ from above for any connected Lie group $G$ with $\# Z ( G / R ) < \infty$.

When $G$ is either a simply connected solvable Lie group or a connected nilpotent Lie group, Nielsen determined the value of $Y ( p_1 , p_2 ; G )$ as follows.

\begin{fact}[Nielsen {\cite[Corollaries (a) and (b)]{MR1304346}}]
  \label{fact:Nielsen}

  Suppose that $1 < p_1 , p_2 < \infty$ satisfy \eqref{eq:Young-R-compare-condition}.
  If $G$ is either a simply connected solvable Lie group or a connected nilpotent Lie group, then
  \begin{align}
    Y ( p_1 , p_2 ; G )
    = Y ( p_1 , p_2 ; \mathbb{R} )^{\dim G - \mathrm{rank} ( \ker ( \tilde{G} \to G ) ) }
  \end{align}
  holds, where $\tilde{G}$ is the universal cover of $G$.

\end{fact}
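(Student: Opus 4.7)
The plan is to prove the stated equality as two matching inequalities. The upper bound follows from iterating the normal-subgroup product formula \eqref{eq:Young-product-normal} along a suitable composition series, while the lower bound is obtained by reducing the connected nilpotent case to the simply connected one through a central compact quotient, and then constructing near-extremizers in exponential-type coordinates.

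For the upper bound, I would first take $G$ simply connected solvable. A refinement of the derived series yields a filtration $G = G_0 \supset G_1 \supset \cdots \supset G_n = \{e\}$ by closed normal subgroups with each quotient $G_i / G_{i+1} \cong \mathbb{R}$. Iterating \eqref{eq:Young-product-normal} and invoking $Y(p_1, p_2; \mathbb{R}) < 1$ from Fact \ref{fact:Beckner} gives $Y(p_1, p_2; G) \leq Y(p_1, p_2; \mathbb{R})^{\dim G}$, which matches the desired expression since $\operatorname{rank}(\ker(\tilde{G} \to G)) = 0$. For connected nilpotent $G$, the identity component $K$ of the maximal compact subgroup is a central torus of dimension $r := \operatorname{rank}(\ker(\tilde{G} \to G))$, and $G/K$ is simply connected nilpotent of dimension $\dim G - r$. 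Applying \eqref{eq:Young-product-normal} with $N = K$ and combining $Y(p_1, p_2; K) \leq 1$ (by \eqref{eq:Young-order-reversing-example-classic-state}) with the simply connected bound for $G/K$ delivers the desired upper bound.

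For the lower bound, I would first reduce the connected nilpotent case to the simply connected one. Given near-extremizers $\phi_i$ on $G/K$, lift them to $K$-invariant functions $\Phi_i := \phi_i \circ \pi$ on $G$ via the projection $\pi \colon G \to G/K$. Using Weil's formula for the compact normal subgroup $K$, the factors of $\operatorname{vol}(K)^{1/p_1}$, $\operatorname{vol}(K)^{1/p_2}$, and $\operatorname{vol}(K)^{1 + 1/p}$ appearing respectively in $\|\Phi_1\|_{p_1}$, $\|\Phi_2\|_{p_2}$, and $\|\Phi_1 * \Phi_2\|_p$ cancel exactly thanks to $1 + 1/p = 1/p_1 + 1/p_2$, yielding $Y(p_1, p_2; G) \geq Y(p_1, p_2; G/K)$. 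For a simply connected solvable $G$, I would fix global coordinates of the second kind $(t_1, \ldots, t_n) \mapsto \exp(t_1 X_1) \cdots \exp(t_n X_n)$ adapted to the composition series, so that $G$ is diffeomorphic to $\mathbb{R}^n$. Given near-extremizers $\phi_i$ for $Y(p_1, p_2; \mathbb{R}^n)$, rescale them by $\phi_i^\epsilon(x) := \phi_i(x/\epsilon)$ and let $\epsilon \to 0$. Since the Haar density tends to $1$ at the origin, $\Delta^{1/p_1'} = 1 + O(\epsilon)$ on the $\epsilon$-scale, and the group law satisfies $y^{-1} x = x - y + O(\epsilon^2)$ there by Baker--Campbell--Hausdorff, the scale invariance forced by $1 + 1/p = 1/p_1 + 1/p_2$ ensures that $\|\phi_1^\epsilon * (\phi_2^\epsilon \Delta^{1/p_1'})\|_p / (\|\phi_1^\epsilon\|_{p_1} \|\phi_2^\epsilon\|_{p_2})$ converges to the $\mathbb{R}^n$ extremal ratio $Y(p_1, p_2; \mathbb{R})^{\dim G}$.

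The main obstacle is making the scaling limit in this last step rigorous: one needs uniform control over the convolution integrand as $\epsilon \to 0$, which is nontrivial because the $\mathbb{R}^n$ extremizers (Gaussians) are not compactly supported, so the $O(\epsilon^2)$ error term in the group law is not bounded uniformly on the whole domain. The standard remedy is to first truncate each $\phi_i$ to a compact neighborhood of the origin (where the coordinate system and Taylor expansions of the group law, Haar density, and modular function are uniformly valid), bound the truncation loss by a quantity that vanishes as the truncation radius grows, and then apply dominated convergence to conclude. The upper bound, by contrast, is essentially mechanical once the composition-series decomposition and the compact-quotient reduction are set up.
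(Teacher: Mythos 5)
The paper does not prove this statement: it is quoted as a Fact with a citation to Nielsen, so there is no internal proof to measure your argument against. Judged on its own, your two-sided strategy is sound and all the main ingredients are correct. The upper bound via iterating \eqref{eq:Young-product-normal} works, with one small precision: a refinement of the derived series of a simply connected solvable group gives a \emph{subnormal} series (each $G_{i+1}$ normal in $G_i$, with $G_i/G_{i+1}\cong\mathbb{R}$ because connected subgroups of a simply connected solvable group are closed and simply connected), not necessarily a series of subgroups normal in $G$; this is exactly what the iteration needs, so nothing breaks, but you should say ``subnormal.'' The reduction $Y(p_1,p_2;G)\geq Y(p_1,p_2;G/K)$ for the central maximal torus $K$ is correct --- the powers of $\operatorname{vol}(K)$ cancel precisely because $1/p_1+1/p_2=1+1/p$, and $\Delta\equiv 1$ in the nilpotent case --- and $G/K$ is indeed simply connected of dimension $\dim G-\operatorname{rank}(\pi_1(G))$. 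For the hard step, the blow-up lower bound $Y(p_1,p_2;G)\geq Y(p_1,p_2;\mathbb{R})^{\dim G}$, your truncation-plus-rescaling scheme is the standard and workable one; note that this is precisely the content of Cowling--Martini--M\"{u}ller--Parcet \cite[Proposition 2.4 (i)]{MR4000236}, which the paper itself invokes in Section \ref{subsec:Young-known-result-non-compact-dimension}, so you could outsource it rather than redo the dominated-convergence bookkeeping. The historical proofs (Klein--Russo, Nielsen) instead exploit the iterated semidirect-product structure with explicit one-dimensional near-extremizers at each stage, which avoids the blow-up limit but is essentially equivalent in content; your version trades that for a single local linearization argument and is a legitimate reconstruction.
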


We have $r ( G ) = \mathrm{rank} ( \ker ( \tilde{G} \to G ) )$ for any connected solvable Lie group $G$ (Example \ref{ex:non-compact-dimension-bound-apply} \ref{item:non-compact-dimension-bound-apply-Nielsen}).
Thus, if $G$ is either a simply connected solvable Lie group or a connected nilpotent Lie group, then Corollary \ref{cor:Young-non-compact-dimension} follows from Fact \ref{fact:Nielsen}.
In addition, Bennett--Bez--Buschenhenke--Cowling--Flock gave a stronger bound than Corollary \ref{cor:Young-non-compact-dimension} in the case where each support of the functions is sufficiently small as follows.

\begin{table}
  \centering
  \caption{Authors who proved Corollary \ref{cor:Young-non-compact-dimension} for some $G$}
  \label{tab:Young-non-compact-dimension-compare}

  \vspace{8pt}

  \begin{tabular}{c|c}
    Connected Lie group $G$              & Author                                                                                                 \\
    \hline \hline
    Compact group                        & Weil (Example \ref{ex:Young-order-reversing-example} \ref{item:Young-order-reversing-example-classic}) \\
    \hline
    $\mathbb{R}^n$                       & Beckner (Fact \ref{fact:Beckner})                                                                      \\
    \hline
    Simply connected nilpotent Lie group & Klein--Russo \cite[Corollary 2.5']{MR499945}                                                           \\
    \hline
    Simply connected solvable Lie group, & \multirow{2}{*}{Nielsen (Fact \ref{fact:Nielsen})}                                                     \\
    Nilpotent Lie group                  &
  \end{tabular}
\end{table}

\begin{fact}[Bennett--Bez--Buschenhenke--Cowling--Flock {\cite[Corollary 2.4]{MR4173156}}]
  \label{fact:Bennett-Bez-Buschenhenke-Cowling-Flock}

  Let $1 \leq p \leq \infty$ be as in \eqref{eq:optimal-constant-p-definition} for $1 \leq p_1 , p_2 \leq \infty$ with \eqref{eq:optimal-constant-p1-p2-condition}.
  Then for any Lie group $G$ and any $Y_0 > Y ( p_1 , p_2 ; \mathbb{R} )^{\dim G}$, there exists a non-empty open set $V \subset G$ such that
  \begin{align}
    \| \phi_1 * ( \phi_2 \Delta^{1 / p_1'} ) \|_p \leq Y_0 \| \phi_1 \|_{p_1} \| \phi_2 \|_{p_2} \label{eq:Bennett-Bez-Buschenhenke-Cowling-Flock-inequality}
  \end{align}
  for any measurable functions $\phi_1 , \phi_2 \colon G \to \mathbb{C}$ whose supports are contained in $V$.

\end{fact}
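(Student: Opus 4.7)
The plan is to transfer the problem from $G$ to $\mathbb{R}^n$ (with $n = \dim G$) via the exponential chart, applying Beckner's sharp constant (Fact~\ref{fact:Beckner}) with multiplicative corrections that can be made arbitrarily small by shrinking $V$. Concretely, fix a small star-shaped neighborhood $U \subset \mathfrak{g} \cong \mathbb{R}^n$ of $0$ on which $\exp \colon U \to \exp(U)$ is a diffeomorphism. Three pieces of data control the comparison with Euclidean convolution: the Baker--Campbell--Hausdorff formula
\begin{equation}
  \exp(X)^{-1} \exp(Y) = \exp\bigl(Y - X + R(X, Y)\bigr),
  \qquad \| R(X, Y) \| = O\bigl(\|X\|(\|X\| + \|Y\|)\bigr);
\end{equation}
the smooth density $J$ of the left Haar measure in exponential coordinates, which satisfies $J(0) = 1$; and the continuity of the modular function $\Delta \circ \exp$ at $0$ with value $1$. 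Near the origin, all three become arbitrarily close to their Euclidean counterparts (vector subtraction, Lebesgue measure, and the constant $1$).

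Given $\epsilon > 0$, choose $V := \exp(U_\epsilon)$ with $U_\epsilon \subset U$ a convex symmetric neighborhood of $0$ small enough that $|J(X) - 1|$ and $|\Delta(\exp(X))^{1/p_1'} - 1|$ are less than $\epsilon$ on $U_\epsilon$, and such that for each fixed $Y \in U_\epsilon$ the substitution $X \mapsto Z(X, Y) := -X + Y + R(X, Y)$ is a diffeomorphism of $U_\epsilon$ onto its image with Jacobian determinant in $(1 - \epsilon, 1 + \epsilon)$. For functions $\phi_i$ supported in $V$, define the transplants $\tilde\phi_i(X) := \phi_i(\exp X) J(X)^{1/p_i}$ on $\mathbb{R}^n$, so that $\|\tilde\phi_i\|_{L^{p_i}(\mathbb{R}^n)} = \|\phi_i\|_{L^{p_i}(G)}$ exactly. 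Unwinding the definition, $\phi_1 * (\phi_2 \Delta^{1/p_1'})(\exp Y)$ becomes an integral over $X \in U_\epsilon$ of $\tilde\phi_1(X) \tilde\phi_2\bigl(Z(X, Y)\bigr)$ times a factor $J(X)^{1/p_1'} J(Z(X, Y))^{-1/p_2} \Delta(\exp Z(X, Y))^{1/p_1'}$, which lies in $[(1 - C_1 \epsilon), (1 + C_1 \epsilon)]$ uniformly. Performing the change of variables from $X$ to $Z$ (with Jacobian within $1 \pm \epsilon$ of $1$) identifies this with the Euclidean convolution $\tilde\phi_1 *_{\mathbb{R}^n} \tilde\phi_2(Y)$ up to a uniform factor $1 + C_2 \epsilon$.

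Taking $L^p$-norms in $Y$ (with one more factor $(1 + \epsilon)^{1/p}$ from $J(Y)$ in the outer integral) and then invoking Fact~\ref{fact:Beckner} for $\mathbb{R}^n$ yields
\begin{equation}
  \| \phi_1 * (\phi_2 \Delta^{1/p_1'}) \|_p
  \leq (1 + C \epsilon) \, Y(p_1, p_2; \mathbb{R})^n \, \| \phi_1 \|_{p_1} \| \phi_2 \|_{p_2}
\end{equation}
for some constant $C$ depending only on the chart and on $p_1, p_2, p$. Since $Y_0 > Y(p_1, p_2; \mathbb{R})^{\dim G}$, choose $\epsilon > 0$ small enough that $(1 + C \epsilon) \, Y(p_1, p_2; \mathbb{R})^n \leq Y_0$; the corresponding $V$ then satisfies \eqref{eq:Bennett-Bez-Buschenhenke-Cowling-Flock-inequality}.

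The main obstacle is making the three correction factors genuinely uniform: this requires quantitative BCH estimates rather than merely pointwise continuity, in particular a uniform bound on the Jacobian of $X \mapsto Z(X, Y)$ simultaneously for $(X, Y) \in U_\epsilon \times U_\epsilon$, together with the observation that $Z(X, Y) \in U$ so that $J$ and $\Delta$ may be evaluated there. Once these quantitative estimates are in place, the remainder is a routine application of the Euclidean sharp constant after a change of variables, and the argument is essentially the same for $p_1 = 1$ or $p_2 = 1$ (where the constant is trivially $1$) as for the nontrivial interior case.
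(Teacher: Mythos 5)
First, a remark on the comparison itself: the paper does not prove this statement. It is imported verbatim as an external result (Corollary 2.4 of Bennett--Bez--Buschenhenke--Cowling--Flock), and the only argument the paper adds is the observation that connectedness of $G$ may be dropped by shrinking $V$ into the identity component. So your proposal must be measured against the original proof in the cited reference, which goes through their nonlinear Brascamp--Lieb inequality via an induction-on-scales argument --- not through a change of variables in an exponential chart.

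Your proposal has a genuine gap at the step where you claim that the substitution $X \mapsto Z(X,Y)$ ``identifies'' the group convolution with the Euclidean convolution $\tilde\phi_1 *_{\mathbb{R}^n} \tilde\phi_2(Y)$ up to a uniform factor $1 + C_2\epsilon$. After substituting $Z = Z(X,Y)$ for fixed $Y$, the integrand contains $\tilde\phi_1\bigl(X(Z,Y)\bigr)$ with $X(Z,Y) = Y - Z + O(\epsilon^2)$; this evaluation point is only \emph{close to} $Y - Z$, not equal to it. For an arbitrary $\tilde\phi_1 \in L^{p_1}$ --- say a sum of bumps at a scale $\delta$ far smaller than the $O(\epsilon^2)$ displacement --- there is no pointwise comparison $\tilde\phi_1(X(Z,Y)) \approx \tilde\phi_1(Y-Z)$, so the discrepancy cannot be absorbed into a multiplicative constant. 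No single substitution simultaneously linearizes the arguments of $\tilde\phi_1$ and $\tilde\phi_2$, and this is exactly the crux of the result: whether the sharp Young/Brascamp--Lieb constant is stable under such \emph{nonlinear} perturbations of the underlying maps is the content of the nonlinear Brascamp--Lieb theorem, which the cited authors prove by induction on scales combined with the stability of Brascamp--Lieb constants under perturbation of the linear data. The Jacobian, Haar-density, and modular-function corrections you list are indeed routine and harmless, but the obstacle you flag at the end (uniform quantitative BCH estimates) is not where the difficulty lies. As written, your argument only recovers the trivial constant $1$ (classical Young via H\"older), not the sharp constant $Y(p_1,p_2;\mathbb{R})^{\dim G}$.
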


The connectedness is assumed in the original paper of Bennett--Bez--Buschenhenke--Cowling--Flock.
Nevertheless, Fact \ref{fact:Bennett-Bez-Buschenhenke-Cowling-Flock} holds without connectedness because $V$ can be rearranged to satisfy $V \subset G_0$.
The value $Y ( p_1 , p_2 ; \mathbb{R} )^{\dim G}$ in Fact \ref{fact:Bennett-Bez-Buschenhenke-Cowling-Flock} is the best possible by the result of Cowling--Martini--M\"{u}ller--Parcet \cite[Proposition 2.4 (i)]{MR4000236}.

Fact \ref{fact:Bennett-Bez-Buschenhenke-Cowling-Flock} gives a stronger bound than Theorem \ref{thm:Young-order-reversing} with the assumption that the supports are sufficiently small.
For example, when $G = SL_2 ( \mathbb{R} )$, we have \eqref{eq:SL2-bound} by $\dim G = 3$ and $r ( G ) = \dim SO ( 2 ) = 1$.
On the other hand, for any $Y_0 > Y ( p_1 , p_2 ; \mathbb{R} )^3$, there exists a non-empty open set $V \subset SL_2 ( \mathbb{R} )$ such that \eqref{eq:Bennett-Bez-Buschenhenke-Cowling-Flock-inequality} holds for any $\phi_1 , \phi_2 \colon SL_2 ( \mathbb{R} ) \to \mathbb{C}$ whose supports are contained in $V$.

\section{Optimal constant on the boundary}
\label{sec:boundary}

In this section, we show Theorem \ref{thm:Young-order-reversing} when $( p_1 , p_2 )$ is on the boundary of the range that satisfies the assumption.
In this case, the equality of the classical Young's inequality \eqref{eq:Young-order-reversing-example-classic-state} holds for any locally compact group $G$ as follows.

\begin{lemma}
  \label{lem:Young-bound}

  Suppose that $1 \leq p_1 , p_2 \leq \infty$ with \eqref{eq:optimal-constant-p1-p2-condition} satisfy at least one of the following cases \ref{item:Young-bound-p1}, \ref{item:Young-bound-p2}, and \ref{item:Young-bound-p}.

  \begin{enumerate}
    \item \label{item:Young-bound-p1}
          One has $p_1 = 1$.
    \item \label{item:Young-bound-p2}
          One has $p_2 = 1$.
    \item \label{item:Young-bound-p}
          One has $1 / p_1 + 1 / p_2 = 1$.
  \end{enumerate}
  Then $Y ( p_1 , p_2 ; G ) = 1$ holds for any locally compact group $G$.

\end{lemma}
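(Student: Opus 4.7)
The bound $Y(p_1, p_2; G) \leq 1$ is the classical Young inequality recorded in Example \ref{ex:Young-order-reversing-example} \ref{item:Young-order-reversing-example-classic}, so my plan is to prove the matching lower bound $Y(p_1, p_2; G) \geq 1$ in each of the three cases. In case \ref{item:Young-bound-p} I will exhibit an explicit pair attaining $1$; in cases \ref{item:Young-bound-p1} and \ref{item:Young-bound-p2}, where I may assume the relevant $L^p$-exponent is finite since the endpoint $p = \infty$ falls under case \ref{item:Young-bound-p}, I will build an optimizing sequence from an approximate identity at $e$.

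For case \ref{item:Young-bound-p}, where $p = \infty$ and $p_2 = p_1'$, pick any compact neighborhood $U$ of the identity with $0 < |U| < \infty$ and set
\[
\phi_1(g) := |U|^{-1/p_1} \chi_{U^{-1}}(g) \Delta(g)^{-1/p_1}, \qquad \phi_2(h) := |U|^{-1/p_2} \chi_U(h).
\]
Applying \eqref{eq:modular-function} to convert integrals involving $\chi_{U^{-1}} \Delta^{-1}$ into integrals of $\chi_U$ immediately gives $\|\phi_1\|_{p_1} = \|\phi_2\|_{p_2} = 1$. Evaluating the convolution at $g' = e$ and using the same substitution, the product $\Delta(g)^{-1/p_1} \Delta(g^{-1})^{1/p_1'}$ collapses to $\Delta(g)^{-1}$ (by $1/p_1 + 1/p_1' = 1$), so the integral reduces to $|U|^{-1} \int_G \chi_{U^{-1}}(g) \Delta(g)^{-1} dg = 1$; hence $\|\phi_1 * (\phi_2 \Delta^{1/p_1'})\|_\infty \geq 1$.

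For case \ref{item:Young-bound-p1} with $p_1 = 1$ and $p_2 < \infty$ the factor $\Delta^{1/p_1'}$ is trivial. Fix $\phi_2 \in C_c(G)$ with $\|\phi_2\|_{p_2} = 1$ and any standard approximate identity $\{\phi_1^{(\alpha)}\}$ (nonnegative, unit $L^1$-norm, supports shrinking to $\{e\}$); Minkowski's integral inequality together with $L^{p_2}$-continuity of left translation gives $\phi_1^{(\alpha)} * \phi_2 \to \phi_2$ in $L^{p_2}$, so the norm tends to $1$.

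For case \ref{item:Young-bound-p2} with $p_2 = 1$ and $p_1 < \infty$, fix $\phi_1 \in C_c(G)$ with $\|\phi_1\|_{p_1} = 1$ and an approximate identity $\{\phi_2^{(\alpha)}\}$ as above, and set $\psi^{(\alpha)} := \phi_2^{(\alpha)} \Delta^{1/p_1'}$. Changing variable via $h = g^{-1} g'$ using \eqref{eq:modular-function} yields
\[
\phi_1 * \psi^{(\alpha)}(g') = \int_G \phi_1(g' h^{-1}) \phi_2^{(\alpha)}(h) \Delta(h)^{-1/p_1} dh.
\]
The main obstacle here is that right translation is not isometric on $L^{p_1}(G)$ when $G$ is non-unimodular; however, the operator norm of $\phi \mapsto \phi(\cdot\, h^{-1})$ equals $\Delta(h)^{-1/p_1}$, which tends to $1$ as $h \to e$. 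Combined with $L^{p_1}$-continuity of right translation and a further application of Minkowski's integral inequality, this yields $\phi_1 * \psi^{(\alpha)} \to \phi_1$ in $L^{p_1}$, and the norm again tends to $1$.
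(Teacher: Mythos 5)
Your argument is correct and, for cases \ref{item:Young-bound-p} and \ref{item:Young-bound-p1}, is essentially the paper's proof: the upper bound is the classical Young inequality, and the content is the lower bound $Y(p_1,p_2;G)\ge 1$. Indeed, your case-\ref{item:Young-bound-p} pair is exactly the paper's construction specialized to $\phi=|U|^{-1}\chi_{U^{-1}}\Delta^{-1}$, and your case \ref{item:Young-bound-p1} is the approximate-identity argument the paper imports from Hewitt--Ross. The one genuine difference is case \ref{item:Young-bound-p2}: the paper reduces it to case \ref{item:Young-bound-p1} via the symmetry $Y(p_1,p_2;G)=Y(p_2,p_1;G)$ (Lemma \ref{lem:optimal-constant-transform} \ref{item:optimal-constant-transform-constant}), whereas you argue directly with right translations; your route is self-contained but must confront the non-isometry of right translation on $L^{p_1}(G)$, which the symmetry lemma absorbs once and for all. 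Two small repairs are needed. First, in case \ref{item:Young-bound-p} the value of the convolution at the single point $e$ does not by itself control the essential supremum; you should note, as the paper does, that $\phi_1*(\phi_2\Delta^{1/p_1'})$ is continuous (it is the pairing of an $L^{p_1}$ function against translates of an $L^{p_1'}$ function), so the pointwise value $1$ at $e$ does give $\|\cdot\|_\infty\ge 1$. Second, the operator norm of $\phi\mapsto\phi(\cdot\,h^{-1})$ on $L^{p_1}(G)$ is $\Delta(h)^{1/p_1}$, not $\Delta(h)^{-1/p_1}$, since the left Haar measure satisfies $\mu(Eh)=\Delta(h)\mu(E)$; this sign slip is harmless, because the normalized operator $\phi\mapsto\Delta(h)^{-1/p_1}\phi(\cdot\,h^{-1})$ appearing in your displayed formula is indeed an isometry, and both the scalar factor and the translation defect tend to $1$ and $0$ respectively as $h\to e$, so Minkowski's integral inequality still yields $\phi_1*\psi^{(\alpha)}\to\phi_1$ in $L^{p_1}$.
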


Theorem \ref{thm:Young-order-reversing} follows from Lemma \ref{lem:Young-bound} in the cases \ref{item:Young-bound-p1}, \ref{item:Young-bound-p2}, and \ref{item:Young-bound-p}.
We show the following lemma of the symmetry of the optimal constant to prove Lemma \ref{lem:Young-bound}.

\begin{lemma}
  \label{lem:optimal-constant-transform}

  Let $G$, $p_1$, $p_2$, $p$, $\Delta$, and $Y ( p_1 , p_2 ; G )$ be as in Definition \ref{def:optimal-constant}.

  \begin{enumerate}
    \item \label{item:optimal-constant-transform-convolution}
          One has
          \begin{align}
            \phi_1 * ( \phi_2 \Delta^{1 / p_1'} ) ( g ' )
            = \left( \frac{\phi_2 ( \cdot^{-1} )}{\Delta^{1 / p_2}} \right) * \left( \frac{\phi_1 ( \cdot^{-1} )}{\Delta^{1 / p}} \right) ( g'^{-1} ) \Delta ( g' )^{- 1 / p} \label{eq:optimal-constant-transform-convolution-state}
          \end{align}
          for any $g' \in G$ and any measurable functions $\phi_1 , \phi_2 \colon G \to \mathbb{C}$.

    \item \label{item:optimal-constant-transform-constant}
          One has $Y ( p_1 , p_2 ; G ) = Y ( p_2 , p_1 ; G )$.

  \end{enumerate}

\end{lemma}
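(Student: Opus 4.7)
The plan is to prove part~(1) by a direct change-of-variables computation inside the convolution integral, and then to deduce part~(2) by using that transformation to set up a norm-preserving bijection between the admissible pairs for $Y(p_1,p_2;G)$ and $Y(p_2,p_1;G)$.

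For part~(1), I would expand the convolution on the left,
\[
\phi_1 * (\phi_2 \Delta^{1/p_1'})(g') = \int_G \phi_1(g)\,\phi_2(g^{-1}g')\,\Delta(g^{-1}g')^{1/p_1'}\,dg,
\]
and make the substitution $g = g'h^{-1}$. This decomposes as left translation (which preserves the left Haar measure) followed by inversion, which by \eqref{eq:modular-function} contributes a Jacobian factor $\Delta(h)^{-1}$, so the integrand becomes $\phi_1(g'h^{-1})\phi_2(h)\Delta(h)^{1/p_1'-1}\,dh = \phi_1(g'h^{-1})\phi_2(h)\Delta(h)^{-1/p_1}\,dh$. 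Unfolding the right-hand side of \eqref{eq:optimal-constant-transform-convolution-state} directly and using $\Delta(h^{-1}g'^{-1}) = \Delta(h)^{-1}\Delta(g')^{-1}$ reduces it to the same integral, where the relation \eqref{eq:optimal-constant-p-definition} matches the exponent of $\Delta(h)$.

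For part~(2), I would define
\[
\psi_1(g) := \phi_2(g^{-1})\Delta(g)^{-1/p_2}, \qquad \psi_2(g) := \phi_1(g^{-1})\Delta(g)^{-1/p_1},
\]
and verify $\|\psi_1\|_{p_2} = \|\phi_2\|_{p_2}$ and $\|\psi_2\|_{p_1} = \|\phi_1\|_{p_1}$ by applying \eqref{eq:modular-function} to $|\psi_1|^{p_2}$ and $|\psi_2|^{p_1}$. Identity \eqref{eq:optimal-constant-transform-convolution-state} then reads
\[
\phi_1*(\phi_2 \Delta^{1/p_1'})(g') = \bigl(\psi_1 * (\psi_2 \Delta^{1/p_2'})\bigr)(g'^{-1})\,\Delta(g')^{-1/p},
\]
where the identification of $\phi_1(\cdot^{-1})\Delta^{-1/p}$ with $\psi_2 \Delta^{1/p_2'}$ relies on $1/p + 1/p_2' = 1/p_1$, a direct consequence of \eqref{eq:optimal-constant-p-definition}. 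Taking $L^p$ norms and invoking \eqref{eq:modular-function} one more time to absorb the inversion together with the $\Delta(g')^{-1/p}$ factor yields $\|\phi_1*(\phi_2\Delta^{1/p_1'})\|_p = \|\psi_1*(\psi_2\Delta^{1/p_2'})\|_p$. Since the analogous transformation with $(p_1,p_2)$ swapped inverts the map $(\phi_1,\phi_2) \mapsto (\psi_1,\psi_2)$, this is a bijection between the admissible unit-norm pairs, and taking suprema yields the desired equality.

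The only nontrivial aspect is bookkeeping of $\Delta$-exponents: each step depends on an identity such as $1/p_1' - 1 = -1/p_1$, $\Delta(h^{-1}g'^{-1}) = \Delta(h)^{-1}\Delta(g')^{-1}$, or $1/p + 1/p_2' = 1/p_1$, combined with a correctly placed invocation of \eqref{eq:modular-function}; beyond the choice of substitution $g = g'h^{-1}$ in part~(1) and the symmetric swap defining $\psi_1,\psi_2$ in part~(2), there is no conceptual obstacle.
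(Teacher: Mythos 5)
Your proposal is correct and follows essentially the same route as the paper: part~(1) by a left-translation/inversion change of variables in the convolution integral with the exponent identity $1/p+1/p_1'=1/p_2$, and part~(2) by passing to $\psi_1=\phi_2(\cdot^{-1})\Delta^{-1/p_2}$, $\psi_2=\phi_1(\cdot^{-1})\Delta^{-1/p_1}$, using $1/p+1/p_2'=1/p_1$ and \eqref{eq:modular-function} to preserve all norms. The only cosmetic differences are the direction of the substitution in part~(1) and your phrasing of part~(2) as a norm-preserving bijection rather than a one-sided inequality combined with symmetry.
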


\begin{proof}
  \begin{enumerate}
    \item
          We have
          \begin{align}
            \phi_1 * ( \phi_2 \Delta^{1 / p_1'} ) ( g ' )
            = \int_{G}^{} \phi_1 ( g' g ) \phi_2 ( g^{-1} ) \Delta ( g^{-1} )^{1 / p_1'} dg \label{eq:convolution-transform}
          \end{align}
          by the left invariance of $dg$.
          Since
          \begin{align}
            \frac{1}{p} + \frac{1}{p_1'}
            = \frac{1}{p_1} + \frac{1}{p_2} - 1 + \frac{1}{p_1'}
            = \frac{1}{p_2} \label{eq:p-definition-transform}
          \end{align}
          holds by \eqref{eq:optimal-constant-p-definition}, we obtain \eqref{eq:optimal-constant-transform-convolution-state} by \eqref{eq:convolution-transform}.

    \item
          It suffices to show
          \begin{align}
            \| \phi_1 * ( \phi_2 \Delta^{1 / p_1'} ) \|_p
            \leq Y ( p_2 , p_1 ; G) \| \phi_1 \|_{p_1} \| \phi_2 \|_{p_2} \label{eq:transform-conclusion}
          \end{align}
          for any measurable functions $\phi_1 , \phi_2 \colon G \to \mathbb{R}_{\geq 0}$.
          We have
          \begin{align}
            \| \phi_1 * ( \phi_2 \Delta^{1 / p_1'} ) \|_p^p
             & = \int_{G}^{} \left( \frac{\phi_2 ( \cdot^{-1} )}{\Delta^{1 / p_2}} \right) * \left( \frac{\phi_1 ( \cdot^{-1} )}{\Delta^{1 / p}} \right) ( g'^{-1} )^p \Delta ( g' )^{- 1} dg'        \\
             & = \left\| \left( \frac{\phi_2 ( \cdot^{-1} )}{\Delta^{1 / p_2}} \right) * \left( \frac{\phi_1 ( \cdot^{-1} )}{\Delta^{1 / p}} \right) \right\|_p^p \label{eq:convolution-Lp-transform}
          \end{align}
          by \eqref{eq:modular-function} and the assertion \ref{item:optimal-constant-transform-convolution}.
          Since
          \begin{align}
            \frac{1}{p} + \frac{1}{p_2'}
            = \frac{1}{p_1} \label{eq:p-definition-transform-similar}
          \end{align}
          holds by an argument similar to \eqref{eq:p-definition-transform}, we obtain \eqref{eq:transform-conclusion} by \eqref{eq:modular-function} and \eqref{eq:convolution-Lp-transform}.
          \qedhere
  \end{enumerate}

\end{proof}

\begin{proof}[Proof of Lemma \ref{lem:Young-bound}]
  It suffices to show
  \begin{align}
    Y ( p_1 , p_2 ; G) \geq 1 \label{eq:Young-classic-reverse}
  \end{align}
  by Example \ref{ex:Young-order-reversing-example} \ref{item:Young-order-reversing-example-classic}.

  First, we show \eqref{eq:Young-classic-reverse} in the case \ref{item:Young-bound-p}.
  We assume that an integrable function $\phi \colon G \to \mathbb{R}_{\geq 0}$ satisfies $\| \phi \|_1 = 1$ and
  \begin{align}
    \phi_1 ( g )
              & := \left\{
    \begin{aligned}
       & \phi ( g )^{1 / p_1} &  & \text{if $p_1 < \infty$} \\
       & 1                    &  & \text{if $p_1 = \infty$}
    \end{aligned}
    \right. , &
    \phi_2 ( g )
              & := \left\{
    \begin{aligned}
       & \left( \frac{\phi ( g^{-1} )}{\Delta ( g )} \right)^{1 / p_2} &  & \text{if $p_2 < \infty$} \\
       & 1                                                             &  & \text{if $p_2 = \infty$}
    \end{aligned}
    \right. .
  \end{align}
  Since
  \begin{align}
    \| \phi_1 \|_{p_1}
         & = \| \phi \|_1
    = 1, &
    \| \phi_2 \|_{p_2}
         & = \int_{G}^{} \frac{\phi ( g^{-1} )}{\Delta ( g )} dg
    = \| \phi \|_1
    = 1
  \end{align}
  hold by $1 / p_1 + 1 / p_2 = 1$ and \eqref{eq:modular-function}, the convolution $\phi_1 * ( \phi_2 \Delta^{1 / p_1'} )$ is continuous.
  Thus, one has $\| \phi_1 * ( \phi_2 \Delta^{1 / p_1'} ) \|_\infty = 1$ by
  \begin{align}
    \phi_1 * ( \phi_2 \Delta^{1 / p_1'} ) ( e )
    = \| \phi \|_1
    = 1
  \end{align}
  and hence we obtain \eqref{eq:Young-classic-reverse}.

  Second, we show \eqref{eq:Young-classic-reverse} in the case \ref{item:Young-bound-p1}.
  One may assume $p_2 < \infty$ by the case \ref{item:Young-bound-p}.
  When $\phi_2 \in L^{p_2} ( G )$ is fixed, there exists a sequence of integrable functions $\phi_{1 , m} \colon G \to \mathbb{R}_{\geq 0}$ with $\| \phi_{1 , m} \|_1 = 1$ such that $\phi_{1 , m} * \phi_2$ converges to $\phi_2$ \cite[Theorem 20.15]{MR551496}.
  Thus, we obtain \eqref{eq:Young-classic-reverse} because $\| \phi_{1 , m} * \phi_2 \|_{p_2}$ converges to $\| \phi_1 \|_{p_1} \| \phi_2 \|_{p_2} = \| \phi_2 \|_{p_2}$.

  Finally, \eqref{eq:Young-classic-reverse} in the case \ref{item:Young-bound-p2} follows from the case \ref{item:Young-bound-p1} by Lemma \ref{lem:non-compact-dimension-bound} \ref{item:optimal-constant-transform-constant}.
\end{proof}

\section{Proof of Theorem \ref{thm:Young-order-reversing}}
\label{sec:Young-order-proof}

In this section, we prove Theorem \ref{thm:Young-order-reversing} in the general case.
In Subsection \ref{subsec:Young-order-proof-Haar-subgroup}, we prepare a lemma (Lemma \ref{lem:subgroup-measure}) which represents the left Haar measure on $G$ by that on a closed subgroup $H \subset G$.
In Subsection \ref{subsec:Young-order-proof-Holder}, we give some inequalities (Example \ref{ex:Holder-apply}) to prove Theorem \ref{thm:Young-order-reversing} by applying H\"{o}lder's inequality (Fact \ref{fact:Holder}).
In Subsection \ref{subsec:Young-order-proof-Young-order-proof}, we complete the proof of Theorem \ref{thm:Young-order-reversing} by using Subsection \ref{subsec:Young-order-proof-Haar-subgroup} and Subsection \ref{subsec:Young-order-proof-Holder}.

\subsection{Representing the Haar measure by using a closed subgroup}
\label{subsec:Young-order-proof-Haar-subgroup}

In this subsection, we prove a lemma (Lemma \ref{lem:subgroup-measure}) which represents the left Haar measure on $G$ by that on a closed subgroup $H \subset G$, and we give some examples (Example \ref{ex:subgroup-represent}) which is used in the proof of Theorem \ref{thm:Young-order-reversing}.
We write $X := H \backslash G$ and $\overline{g} := H g \in X$ for the right coset of $g \in G$.

\begin{lemma}
  \label{lem:subgroup-measure}

  Let $H \subset G$ be a closed subgroup of a locally compact group $G$.

  \begin{enumerate}
    \item \label{item:subgroup-measure-modular-extend}
          There exists a continuous function $\delta \colon G \to \mathbb{R}_{> 0}$ such that $\delta |_H$ is the modular function of $H$ and
          \begin{align}
            \int_{H}^{} \phi ( h g ) dh \delta ( g ) \label{eq:subgroup-measure-modular-extend-invariant}
          \end{align}
          is left $H$-invariant for any measurable function $\phi \colon G \to \mathbb{C}$.

    \item \label{item:subgroup-measure-quotient}
          We fix $\delta$ in \ref{item:subgroup-measure-modular-extend}.
          Then there exists a Borel measure $d \overline{g}$ on $X$ such that
          \begin{align}
            \int_{X}^{} \int_{H}^{} \phi ( h g ) dh \delta ( g ) d \overline{g}
            = \int_{G}^{} \phi ( g ) dg \label{eq:subgroup-measure-quotient-condition}
          \end{align}
          for any integrable function $\phi \colon G \to \mathbb{C}$.

  \end{enumerate}

\end{lemma}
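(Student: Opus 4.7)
The plan is to construct $\delta$ via a Bruhat function and then to define $d\overline{g}$ by applying the Riesz representation theorem to a natural averaging functional.

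For part \ref{item:subgroup-measure-modular-extend}, I would invoke the classical existence of a Bruhat function for $(G, H)$: a continuous non-negative $\beta \colon G \to \mathbb{R}_{\geq 0}$ whose support meets each right coset $Hg$ in a non-empty compact set, and for which $g \mapsto \int_H \beta(hg)\, dh$ is finite, positive, and continuous. Its existence for any closed subgroup of a locally compact group is classical (due to Bruhat). Setting
\[
  \delta(g) := \Bigl( \int_H \beta(hg)\, dh \Bigr)^{-1},
\]
the substitution $h \mapsto hh_0$ under the left Haar measure on $H$, which rescales $dh$ by $\Delta_H(h_0)^{-1}$, yields the cocycle identity $\delta(h_0 g) = \Delta_H(h_0)\, \delta(g)$ for $h_0 \in H$, $g \in G$; this is precisely the left $H$-invariance required in \eqref{eq:subgroup-measure-modular-extend-invariant}. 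Setting $g = e$ gives $\delta|_H = \delta(e)\, \Delta_H$, and rescaling $\beta$ by a positive constant normalizes $\delta(e) = 1$, so $\delta|_H = \Delta_H$ as desired.

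For part \ref{item:subgroup-measure-quotient}, let $A \colon C_c(G) \to C(X)$ denote the averaging map $A\phi(\overline{g}) := \int_H \phi(hg)\, dh\, \delta(g)$, which descends to $X$ by the cocycle identity just established. Surjectivity of $A$ onto $C_c(X)$ is immediate: for any $F \in C_c(X)$, the function $\phi(g) := \beta(g)\, F(\overline{g})$ lies in $C_c(G)$ and satisfies $A\phi = F$ by the very definition of $\delta$. The substance of the argument is to show that $A\phi \equiv 0$ forces $\int_G \phi\, dg = 0$, so that the rule $\Lambda(F) := \int_G \phi\, dg$ (for any preimage $\phi$) is a well-defined positive linear functional on $C_c(X)$. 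Riesz representation then yields the Borel measure $d\overline{g}$, and \eqref{eq:subgroup-measure-quotient-condition} holds tautologically for $\phi \in C_c(G)$ by definition, extending to integrable $\phi$ by a standard monotone-convergence argument.

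The principal obstacle is this vanishing step. My plan is to insert $1 = \int_H \beta(hg)\, dh\, \delta(g)$ into $\int_G \phi(g)\, dg$, swap integrals by Fubini, apply the left-Haar substitution $g \mapsto h^{-1} g'$ in the outer integral, and use the cocycle $\delta(h^{-1} g') = \Delta_H(h)^{-1} \delta(g')$ together with \eqref{eq:modular-function} applied to $H$ to collapse the resulting inner $H$-integral back into $\int_H \phi(h g')\, dh$, which equals $A\phi(\overline{g'})/\delta(g')$ and hence vanishes by hypothesis. The only delicate point is the bookkeeping between $\Delta_H$, the left-Haar substitutions on $G$ and $H$, and the cocycle of $\delta$; once that is carried out cleanly, the rest of the lemma follows by standard measure-theoretic machinery.
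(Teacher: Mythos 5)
Your proposal is correct, but it reaches the lemma by a different mechanical route than the paper. The paper works on the left-coset side and black-boxes the classical theory: it cites Folland's existence of a rho-function $\rho$ with $\rho(gh)=\Delta_H(h)\rho(g)/\Delta(h)$, sets $\delta(g):=1/(\Delta(g^{-1})\rho(g^{-1}))$, cites the quotient integral formula for $G/H$, and then transports everything to $X=H\backslash G$ by applying it to $\omega(g)=\phi(g^{-1})\delta(g^{-1})$ and using the inversion identity \eqref{eq:modular-function}. You instead work on the right-coset side from the start: you build $\delta$ directly as the reciprocal of $g\mapsto\int_H\beta(hg)\,dh$ for a Bruhat function $\beta$, and you reprove the Weil quotient-integral formula on $H\backslash G$ via the averaging map $A$ and the Riesz representation theorem. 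Your key vanishing step is sound: inserting $1=\int_H\beta(hg)\,dh\,\delta(g)$, substituting $g\mapsto h^{-1}g'$, using $\delta(h^{-1}g')=\Delta_H(h)^{-1}\delta(g')$ and the inversion formula $\int_H f(h^{-1})\Delta_H(h)^{-1}\,dh=\int_H f(h)\,dh$ does yield $\int_G\phi\,dg=\int_G\beta(g')\,A\phi(\overline{g'})\,dg'$, which vanishes when $A\phi\equiv 0$. What your approach buys is self-containedness and the avoidance of the inversion bookkeeping (the paper needs the extra computation \eqref{eq:dgH-condition-apply} to convert $dgH$ into $d\overline{g}$); what it costs is that you must supply the standard but nontrivial supporting facts yourself --- existence and continuity of the right-coset Bruhat average, surjectivity and range of $A$ onto $C_c(X)$, and the extension from $C_c(G)$ to $L^1(G)$ --- all of which the paper gets for free from the cited references. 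Two small points to tighten when writing it up: the cocycle $\delta(h_0g)=\Delta_H(h_0)\delta(g)$ alone is not literally the left $H$-invariance of \eqref{eq:subgroup-measure-modular-extend-invariant}; you must pair it with the compensating factor $\Delta_H(h_0)^{-1}$ from the substitution $h\mapsto hh_0$ inside $\int_H\phi(hg)\,dh$ (the same computation, but say it). And the normalization $\delta(e)=1$ by rescaling $\beta$ leaves the identity $\int_H\beta(hg)\,dh\,\delta(g)=1$ intact, which your vanishing step silently uses --- worth a sentence.
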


\begin{proof}

  \begin{enumerate}
    \item
          There exists a continuous function $\rho \colon G \to \mathbb{R}_{> 0}$ such that
          \begin{align}
            \rho ( g h ) = \frac{\Delta_H ( h ) \rho ( g )}{\Delta ( h )} \label{eq:rho-function}
          \end{align}
          for any $g \in G$ and $h \in H$ \cite[Proposition 2.56]{MR3444405}, where $\Delta_H$ is the modular function of $H$.
          By scaling, we may assume $\rho ( e ) = 1$.
          When the continuous function $\delta \colon G \to \mathbb{R}_{> 0}$ is defined as
          \begin{align}
            \delta ( g )
            := \frac{1}{\Delta ( g^{-1} ) \rho ( g^{-1})}, \label{eq:delta-definition}
          \end{align}
          we have
          \begin{align}
            \delta ( h g )
             = \frac{1}{\Delta ( g^{-1} h^{-1} ) \rho ( g^{-1} h^{-1} )}
             = \frac{\Delta_H ( h )}{\Delta ( g^{-1} ) \rho ( g^{-1})}                               
             = \Delta_H ( h ) \delta ( g ) \label{eq:delta-distribute}
          \end{align}
          by \eqref{eq:rho-function}.
          In particular, we have $\delta ( h ) = \Delta_H ( h ) \delta ( e ) = \Delta_H ( h )$.
          Thus, it follows from  \eqref{eq:delta-distribute} that
          \begin{align}
            \int_{H}^{} \phi ( h h' g ) dh \delta ( h' g )
            = \int_{H}^{} \phi ( h h' g ) dh \delta ( h' ) \delta ( g )
            = \int_{H}^{} \phi ( h g ) dh \delta ( g )
          \end{align}
          for any $h' \in H$ and any measurable function $\phi \colon G \to \mathbb{C}$ and hence \eqref{eq:subgroup-measure-modular-extend-invariant} is left $H$-invariant.

    \item
    For any $g \in G$ and $h' \in H$, we have
    \begin{align}
      \delta ( h' g ) \int_{H}^{} \phi ( h g ) dh
      = \delta ( g ) \int_{H}^{} \phi ( h h'^{-1} g ) dh
      = \delta ( g ) \delta ( h' ) \int_{H}^{} \phi ( h g ) dh
    \end{align}
    for any measurable function $\phi \colon G \to \mathbb{C}$ by the left invariance of \eqref{eq:subgroup-measure-modular-extend-invariant} and hence
    \begin{align}
      \delta ( h' g ) = \delta ( h' ) \delta ( g ). \label{eq:delta-distribute-obtain}
    \end{align}
    Since
    \begin{align}
      \rho ( g ) := \frac{1}{\Delta ( g ) \delta ( g^{-1} )}
    \end{align}
    is a rho-function by \eqref{eq:delta-distribute-obtain}, there exists a Borel measure $dgH$ on $G / H$ such that
    \begin{align}
      \int_{G / H}^{} \int_{H}^{} \omega ( g h ) dh dgH
      = \int_{G}^{} \omega ( g ) \rho ( g ) dg \label{eq:dgH-condition}
    \end{align}
    for any integrable function $\omega \colon G \to \mathbb{C}$ \cite[Theorem 2.58]{MR3444405}.
By \eqref{eq:modular-function} and applying 
\begin{align}
  \omega ( g ) = \frac{\phi ( g^{-1} )}{ \Delta ( g ) \rho ( g ) }
  = \phi ( g^{-1} ) \delta ( g^{ - 1 } )
\end{align}
to \eqref{eq:dgH-condition}, we have
\begin{align}
  \int_{G}^{} \phi ( g ) dg
  = \int_{G / H}^{} \int_{H}^{} \phi ( ( g h )^{-1} ) \delta ( ( g h )^{-1} ) dh dgH. \label{eq:phi-integral}
\end{align}
Since $\delta |_H$ is the modular function of $H$, we have
\begin{align}
  \int_{H}^{} \phi ( ( g h )^{-1} ) \delta ( ( g h )^{-1} ) dh
  = \int_{H}^{} \phi ( h g^{-1} ) dh \delta ( g^{-1} ) \label{eq:dgH-condition-apply}
\end{align}
by \eqref{eq:modular-function} and \eqref{eq:delta-distribute-obtain}.
Thus, by setting the Borel measure $d \overline{g}$ on $X := H \backslash G$ as
\begin{align}
  \int_{X}^{} \alpha ( \overline{g} ) d\overline{g}
  = \int_{G / H}^{} \alpha ( H g^{-1} ) dgH,
\end{align}
we obtain \eqref{eq:subgroup-measure-quotient-condition} by \eqref{eq:phi-integral} and \eqref{eq:dgH-condition-apply}.
          \qedhere

  \end{enumerate}

\end{proof}

Now we give some examples of Lemma \ref{lem:subgroup-measure} to prove Theorem \ref{thm:Young-order-reversing}.

\begin{example}
  \label{ex:subgroup-represent}

  Let $\phi_1 , \phi_2 \colon G \to \mathbb{R}_{\geq 0}$ be measurable functions.
  We suppose $1 < p_1 , p_2 , p < \infty$ satisfy \eqref{eq:optimal-constant-p-definition}.

  \begin{enumerate}
    \item \label{item:subgroup-represent-phi1}
          Let $s ( h , g ) := \phi_1 ( h g ) \delta ( g )^{1 / p_1}$ for $g \in G$ and $h \in H$.
          Then
          \begin{align}
            S ( \overline{g} )
            := \int_{H}^{} s ( h , g )^{p_1} dh
            = \int_{H}^{} \phi_1 ( h g )^{p_1} dh \delta ( g )
          \end{align}
          is well-defined by Lemma \ref{lem:subgroup-measure} \ref{item:subgroup-measure-modular-extend}, and we have
          \begin{align}
            \int_{X}^{} S ( \overline{g} ) d\overline{g}
            = \| \phi_1 \|_{p_1}^{p_1} \label{eq:subgroup-represent-phi1-integral}
          \end{align}
          by Lemma \ref{lem:subgroup-measure} \ref{item:subgroup-measure-quotient}.

    \item \label{item:subgroup-represent-phi2-g'}
          Let $t ( g , g' ) := ( \phi_2 ( g^{-1} g' )^{p_2} \delta ( g' ) )^{1 / p}$ for $g , g' \in G$.
          Since
          \begin{align}
            \int_{H}^{} t ( h^{-1} h' g , g' )^p dh
            = \int_{H}^{} t ( h^{-1} g , g' )^p dh
          \end{align}
          for any $h' \in H$, the function
          \begin{align}
            T ( \overline{g} , \overline{g'} )
            := \int_{H}^{} t ( h^{-1} g , g' )^p dh
            = \int_{H}^{} \phi_2 ( g^{-1} h g' )^{p_2} dh \delta ( g' )
          \end{align}
          is well-defined by Lemma \ref{lem:subgroup-measure} \ref{item:subgroup-measure-modular-extend}.
          Thus, we have
          \begin{align}
            \int_{X}^{} T ( \overline{g} , \overline{g'} ) d\overline{g'}
            = \int_{X}^{} \int_{H}^{} \phi_2 ( g^{-1} h g' )^{p_2} dh \delta ( g' ) d\overline{g'}
            = \int_{G}^{} \phi_2 ( g^{-1} g' )^{p_2} dg'
            = \| \phi_2 \|_{p_2}^{p_2}
          \end{align}
          by lemma \ref{lem:subgroup-measure} \ref{item:subgroup-measure-quotient}.

    \item \label{item:subgroup-represent-phi2-g}
          Let
          \begin{align}
            u ( g , h , g' )
            := \left( \frac{\phi_2 ( g^{-1} h g' )^{p_2} \Delta ( g^{-1} h g' ) \delta ( g )}{\delta ( h )}  \right)^{1 / p_1'}
          \end{align}
          for $g , g' \in G$ and $h \in H$.
          Since $\delta |_H$ is the modular function of $H$ by Lemma \ref{lem:subgroup-measure} \ref{item:subgroup-measure-modular-extend},
          \begin{align}
            \int_{H}^{} u ( g , h , h' g' )^{p_1'} dh
             & = \int_{H}^{} \frac{\phi_2 ( g^{-1} h h' g' )^{p_2} \Delta ( g^{-1} h h' g' ) \delta ( g )}{\delta ( h )} dh                               \\
             & = \int_{H}^{} \phi_2 ( g^{-1} h^{-1} h' g' )^{p_2} \Delta ( g^{-1} h^{-1} h' g' ) dh \delta ( g )                                          \\
             & = \int_{H}^{} \phi_2 ( g^{-1} h^{-1} g' )^{p_2} \Delta ( g^{-1} h^{-1} g' ) dh \delta ( g ) \label{eq:subgroup-represent-phi2-g-invariant}
          \end{align}
          is independent of $h' \in H$ by \eqref{eq:modular-function}.
          Thus, the function
          \begin{align}
            U ( \overline{g} , \overline{g'} ) := \int_{H}^{} u ( g , h , g' )^{p_1'} dh
          \end{align}
          is well-defined by Lemma \ref{lem:subgroup-measure} \ref{item:subgroup-measure-modular-extend}.
          Now we prove
          \begin{align}
            \int_{X}^{} U ( \overline{g} , \overline{g'} ) d\overline{g}
            = \| \phi_2 \|_{p_2}^{p_2} \label{eq:subgroup-represent-phi2-g-integral}
          \end{align}
          for any $g' \in G$.
          We have
          \begin{align}
            \int_{X}^{} U ( \overline{g} , \overline{g'} ) d\overline{g}
            = \int_{X}^{} \int_{H}^{} \phi_2 ( g^{-1} h^{-1} g' )^{p_2} \Delta ( g^{-1} h^{-1} g' ) dh \delta ( g ) d\overline{g}
          \end{align}
          by \eqref{eq:subgroup-represent-phi2-g-invariant} and
          \begin{align}
            \int_{X}^{} \int_{H}^{} \phi_2 ( g^{-1} h^{-1} g' )^{p_2} \Delta ( g^{-1} h^{-1} g' ) dh \delta ( g ) d\overline{g}
             & = \int_{G}^{} \phi_2 ( g^{-1} g' )^{p_2} \Delta ( g^{-1} g' ) dg \\
             & = \int_{G}^{} \phi_2 ( g^{-1} )^{p_2} \Delta ( g^{-1} ) dg
          \end{align}
          by Lemma \ref{lem:subgroup-measure} \ref{item:subgroup-measure-quotient}.
          Since
          \begin{align}
            \int_{G}^{} \phi_2 ( g^{-1} )^{p_2} \Delta ( g^{-1} ) dg
            = \int_{G}^{} \phi_2 ( g )^{p_2} dg
            = \| \phi_2 \|_{p_2}^{p_2}
          \end{align}
          by \eqref{eq:modular-function}, we obtain \eqref{eq:subgroup-represent-phi2-g-integral}.

    \item \label{item:subgroup-represent-convolution}
          Let $s$, $t$, and $u$ be as in \ref{item:subgroup-represent-phi1}, \ref{item:subgroup-represent-phi2-g'}, and \ref{item:subgroup-represent-phi2-g}, respectively.
          Now we show
          \begin{align}
             & \quad \phi_1 * ( \phi_2 \Delta^{1/p_1'} ) ( h' g' )                                                                                                                                                                 \\
             & = \int_{X}^{} \int_{H}^{} \frac{s ( h , g ) t ( h'^{-1} h g , g') u ( g , h^{-1} h' , g' ) \delta ( h^{-1} h' )^{1 / p_1'}}{\delta ( g' )^{1 / p}} dh d\overline{g} \label{eq:subgroup-represent-convolution-claim}
          \end{align}
          for any $h' \in H$ and $g' \in G$.
          Since
          \begin{align}
             & \quad \phi_1 * ( \phi_2 \Delta^{1/p_1'} ) ( h' g' )                                                                                             \\
             & = \int_{G}^{} \phi_1 ( g ) \phi_2 ( g^{-1} h' g' ) \Delta ( g^{-1} h' g' )^{1 / p_1'} dg                                                        \\
             & = \int_{X}^{} \int_{H}^{} \phi_1 ( h g ) \phi_2 ( g^{-1} h^{-1} h' g' ) \Delta ( g^{-1} h^{-1} h' g' )^{1 / p_1'} dh \delta ( g ) d\overline{g}
          \end{align}
          by Lemma \ref{lem:subgroup-measure}, we obtain \eqref{eq:subgroup-represent-convolution-claim} by
          \begin{align}
             & \quad \phi_1 ( h g ) \phi_2 ( g^{-1} h^{-1} h' g' ) \Delta ( g^{-1} h^{-1} h' g' )^{1 / p_1'} \delta ( g )                  \\
             & = \frac{s ( h , g ) t ( h'^{-1} h g , g') u ( g , h^{-1} h' , g' ) \delta ( h^{-1} h' )^{1 / p_1'}}{\delta ( g' )^{1 / p}}.
          \end{align}

    \item \label{item:subgroup-represent-Lp-norm}
          Since
          \begin{align}
            \| \phi_1 * ( \phi_2 \Delta^{1/p_1'} ) \|_p^p
            = \int_{X}^{} \int_{H}^{} \phi_1 * ( \phi_2 \Delta^{1/p_1'} ) ( h' g' )^p dh' \delta ( g' ) d\overline{g'}
          \end{align}
          by Lemma \ref{lem:subgroup-measure}, we have
          \begin{align}
             & \quad \| \phi_1 * ( \phi_2 \Delta^{1/p_1'} ) \|_p^p                                                                                                                                \\
             & = \int_{X}^{} \int_{H}^{} \left( \int_{X}^{} \int_{H}^{} s ( h , g ) t ( h'^{-1} h g , g') u ( g , h^{-1} h' , g' ) \delta ( h^{-1} h' )^{1 / p_1'} dh d\overline{g} \right)^p dh' d\overline{g'}
          \end{align}
          by \ref{item:subgroup-represent-convolution}.

  \end{enumerate}

\end{example}

\subsection{H\"{o}lder's inequality}
\label{subsec:Young-order-proof-Holder}

In this subsection, we obtain some inequalities (Example \ref{ex:Holder-apply}) by using H\"{o}lder's inequality (Fact \ref{fact:Holder}) to prove Theorem \ref{thm:Young-order-reversing}.

\begin{fact}[H\"{o}lder's inequality]
  \label{fact:Holder}

  Let $k , l \in \mathbb{Z}_{\geq 1}$ and $p_{i,j} , c_i > 0$ for $i = 1 , \cdots , k$ and $j = 1 , \cdots , l$.
  Then
  \begin{align}
     & \quad \left( \int_{G}^{} \phi_1 (g)^{p_1} \cdots \phi_l (g)^{p_l} dg \right)^c                                                                                         \\
     & \leq \left( \int_{G}^{} \phi_1 (g)^{p_{1,1}} \cdots \phi_l (g)^{p_{1,l}} dg \right)^{c_1} \cdots \left( \int_{G}^{} \phi_1 (g)^{p_{k,1}} \cdots \phi_l (g)^{p_{k,l}} dg \right)^{c_k}
  \end{align}
  holds for any measurable functions $\phi_1 , \cdots , \phi_l \colon G \to \mathbb{R}_{\geq 0}$ on a measure space $G$, where
  \begin{align}
    c := c_1 + \cdots + c_k, &  &
    p_j := \frac{p_{1,j} c_1 + \cdots + p_{k,j} c_k}{c}
  \end{align}
  for $j = 1 , \cdots , l$.

\end{fact}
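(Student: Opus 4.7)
The plan is to reduce this multi-function, multi-exponent form of H\"{o}lder's inequality to the classical weighted H\"{o}lder inequality
\begin{align}
\int_{G}^{} \prod_{i = 1}^{k} F_{i} ( g )^{\alpha_{i}} \, dg \leq \prod_{i = 1}^{k} \left( \int_{G}^{} F_{i} ( g ) \, dg \right)^{\alpha_{i}},
\end{align}
valid for $\alpha_{i} > 0$ with $\sum_{i = 1}^{k} \alpha_{i} = 1$. The normalization by $c$ in the definitions of $c$ and $p_{j}$ is exactly what makes a family of exponents sum to $1$, so the classical form should be directly applicable after rescaling.

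Concretely, I would first set $\alpha_{i} := c_{i} / c$, so that $\sum_{i = 1}^{k} \alpha_{i} = 1$, and define the nonnegative measurable functions $F_{i} ( g ) := \phi_{1} ( g )^{p_{i , 1}} \cdots \phi_{l} ( g )^{p_{i , l}}$ for $i = 1 , \ldots , k$. The key algebraic identity
\begin{align}
\prod_{i = 1}^{k} F_{i} ( g )^{\alpha_{i}} = \prod_{j = 1}^{l} \phi_{j} ( g )^{\sum_{i = 1}^{k} \alpha_{i} p_{i , j}} = \prod_{j = 1}^{l} \phi_{j} ( g )^{p_{j}}
\end{align}
follows directly from $p_{j} = \sum_{i} \alpha_{i} p_{i , j}$. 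Applying the weighted H\"{o}lder inequality to $F_{1} , \ldots , F_{k}$ with weights $\alpha_{1} , \ldots , \alpha_{k}$ and then raising both sides to the power $c$ (using $\alpha_{i} c = c_{i}$) would produce exactly the claimed inequality.

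The only remaining ingredient is the weighted H\"{o}lder inequality itself, which can either be cited as standard or proved by induction on $k$ from the two-function case $\int f^{\alpha} g^{1 - \alpha} \, dg \leq ( \int f \, dg )^{\alpha} ( \int g \, dg )^{1 - \alpha}$; the latter follows pointwise from the weighted AM--GM inequality $a^{\alpha} b^{1 - \alpha} \leq \alpha a + ( 1 - \alpha ) b$ after normalizing $f$ and $g$ to have unit integral. Thus the substantive step of the argument is really just checking that $p_{j}$ has been defined as the correct convex combination of the $p_{i , j}$, and I do not anticipate any genuine obstacle beyond clean bookkeeping of the indices.
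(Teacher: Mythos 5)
Your proposal is correct: setting $\alpha_i := c_i / c$ and $F_i := \phi_1^{p_{i,1}} \cdots \phi_l^{p_{i,l}}$, applying the weighted H\"{o}lder inequality $\int_G \prod_i F_i^{\alpha_i} \, dg \leq \prod_i ( \int_G F_i \, dg )^{\alpha_i}$, and raising to the power $c$ gives exactly the stated inequality. The paper records this as a Fact without proof, treating it as standard, and your reduction to the normalized multi-function H\"{o}lder inequality (itself obtained by induction from the two-function case via weighted AM--GM) is precisely the standard argument it implicitly relies on.
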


\begin{example}
  \label{ex:Holder-apply}

  Let $\phi_1$, $\phi_2$, $S$, $t$, $T$, $u$, and $U$ be as in Example \ref{ex:subgroup-represent}.
  Suppose that $1 < p_1 , p_2 , p < \infty$ satisfy \eqref{eq:optimal-constant-p-definition}.

  \begin{enumerate}
    \item \label{item:Holder-apply-H}
          We have
          \begin{align}
            \left( \int_{H}^{} ( t ( h^{-1} g , g' ) u ( g , h , g' ) )^{p_2} dh \right)^{1 / p_2}
            \leq T ( \overline{g} , \overline{g'} )^{1 / p} U ( \overline{g} , \overline{g'} )^{1 / p_1'}
          \end{align}
          by \eqref{eq:p-definition-transform} and Fact \ref{fact:Holder}.

    \item \label{item:Holder-apply-X}
          We show
          \begin{align}
             & \quad \left( \int_{X}^{} S ( \overline{g} )^{1 / p_1} T ( \overline{g} , \overline{g'} )^{1 / p} U ( \overline{g} , \overline{g'} )^{1 / p_1'} d\overline{g} \right)^p                                                \\
             & \leq \left( \| \phi_1 \|_{p_1}^{p_1 / p_2'} \| \phi_2 \|_{p_2}^{p_2 / p_1'} \right)^p \int_{X}^{} S ( \overline{g} ) T ( \overline{g} , \overline{g'} ) d\overline{g}. \label{eq:Holder-apply-X-claim}
          \end{align}
          One has
          \begin{align}
            p \left( \frac{1}{p_1'} + \frac{1}{p_2'} \right) + 1
            = p \left( 2 - \frac{1}{p_1} - \frac{1}{p_2} + \frac{1}{p}\right)
            = p
          \end{align}
          by \eqref{eq:optimal-constant-p-definition}.
          Thus, it follows from \eqref{eq:p-definition-transform-similar} and Fact \ref{fact:Holder} that
          \begin{align}
             & \quad \left( \int_{X}^{} S ( \overline{g} )^{1 / p_1} T ( \overline{g} , \overline{g'} )^{1 / p} U ( \overline{g} , \overline{g'} )^{1 / p_1'} d\overline{g} \right)^p                                                                               \\
             & \leq \left( \int_{X}^{} S ( \overline{g} ) d\overline{g}^{1 / p_2'} \int_{X}^{} U ( \overline{g} , \overline{g'} ) d\overline{g}^{1 / p_1'} \right)^p \int_{X}^{} S ( \overline{g} ) T ( \overline{g} , \overline{g'} ) d\overline{g}
          \end{align}
          and hence we obtain \eqref{eq:Holder-apply-X-claim} by Example \ref{ex:subgroup-represent} \ref{item:subgroup-represent-phi1} and \ref{item:subgroup-represent-phi2-g}.

  \end{enumerate}

\end{example}

\subsection{Completion of the proof}
\label{subsec:Young-order-proof-Young-order-proof}

In this subsection, we complete the proof of Theorem \ref{thm:Young-order-reversing} by using Example \ref{ex:subgroup-represent} and Example \ref{ex:Holder-apply}.

\begin{proof}[Proof of Theorem \ref{thm:Young-order-reversing}]

  It suffices to consider the case of $1 < p_1 , p_2 , p < \infty$ by Lemma \ref{lem:Young-bound}.
  Let $s$, $S$, $t$, $T$, $u$, and $U$ be as in Example \ref{ex:subgroup-represent} for $\phi_1, \phi_2 \colon G \to \mathbb{R}_{\geq 0}$ with $\| \phi_1 \|_{p_1} = \| \phi_2 \|_{p_2} = 1$.
  Then it suffices to show
  \begin{align}
     \int_{X}^{} \int_{H}^{} \left( \int_{X}^{} F ( g , h' , g' ) d\overline{g}\right)^p dh' d\overline{g'}
     \leq Y ( p_1 , p_2 ; H )^p \label{eq:conclusion}
  \end{align}
  by Example \ref{ex:subgroup-represent} \ref{item:subgroup-represent-Lp-norm}, where 
  \begin{align}
    F ( g , h' , g' )
    := \int_{H}^{} s ( h , g ) t ( h'^{-1} h g , g') u ( g , h^{-1} h' , g' ) \delta ( h^{-1} h' )^{1 / p_1'} dh.
  \end{align}
  We have
  \begin{align}
     \int_{H}^{} \left( \int_{X}^{} F ( g , h' , g' ) d\overline{g} \right)^p dh'
     \leq \left( \int_{X}^{} \left( \int_{H}^{} F ( g , h' , g' )^p dh' \right)^{1 / p} d\overline{g} \right)^p \label{eq:Minkowski-apply}
  \end{align}
  by the Minkowski integral inequality.
  Since $\delta |_H$ is the modular function of $H$ by Lemma \ref{lem:subgroup-measure} \ref{item:subgroup-measure-modular-extend}, we have
  \begin{align}
     \left( \int_{H}^{} F ( g , h' , g' )^p dh' \right)^{1 / p}
     & \leq Y ( p_1 , p_2 ; H ) S ( \overline{g} )^{1 / p_1} \left( \int_{H}^{} ( t ( h^{-1} g , g' ) u ( g , h , g' ) )^{p_2} dh \right)^{1 / p_2}                  \\
     & \leq Y ( p_1 , p_2 ; H ) S ( \overline{g} )^{1 / p_1} T ( \overline{g} , \overline{g'} )^{1 / p} U ( \overline{g} , \overline{g'} )^{1 / p_1'}
  \end{align}
  by Example \ref{ex:Holder-apply} \ref{item:Holder-apply-H}.
  Thus, it follows from \eqref{eq:Minkowski-apply} that
  \begin{align}
     \int_{H}^{} \left( \int_{X}^{} F ( g , h' , g' ) d\overline{g} \right)^p dh'
     \leq \left( Y ( p_1 , p_2 ; H ) \int_{X}^{} S ( \overline{g} )^{1 / p_1} T ( \overline{g} , \overline{g'} )^{1 / p} U ( \overline{g} , \overline{g'} )^{1 / p_1'} d\overline{g}  \right)^p.
  \end{align}
  Since
  \begin{align}
    \left( \int_{X}^{} S ( \overline{g} )^{1 / p_1} T ( \overline{g} , \overline{g'} )^{1 / p} U ( \overline{g} , \overline{g'} )^{1 / p_1'} d\overline{g} \right)^p
    \leq \int_{X}^{} S ( \overline{g} ) T ( \overline{g} , \overline{g'} ) d\overline{g}
  \end{align}
  holds by $\| \phi_1 \|_{p_1} = \| \phi_2 \|_{p_2} = 1$ and Example \ref{ex:Holder-apply} \ref{item:Holder-apply-X}, we have
  \begin{align}
     \int_{H}^{} \left( \int_{X}^{} F ( g , h' , g' ) d\overline{g} \right)^p dh'
     \leq Y ( p_1 , p_2 ; H )^p \int_{X}^{} S ( \overline{g} ) T ( \overline{g} , \overline{g'} ) d\overline{g}.
  \end{align}
  Thus, it follows that
  \begin{align}
     \int_{X}^{} \int_{H}^{} \left( \int_{X}^{} F ( g , h' , g' ) d\overline{g} \right)^p dh' d\overline{g'}
     \leq Y ( p_1 , p_2 ; H )^p \int_{X}^{} \int_{X}^{} S ( \overline{g} ) T ( \overline{g} , \overline{g'} ) d\overline{g} d\overline{g'}.
  \end{align}
  Since
  \begin{align}
    \int_{X}^{} \int_{X}^{} S ( \overline{g} ) T ( \overline{g} , \overline{g'} ) d\overline{g} d\overline{g'}
    = \int_{X}^{} \int_{X}^{} T ( \overline{g} , \overline{g'} ) d\overline{g'} S ( \overline{g} ) d\overline{g}
    = \int_{X}^{} S ( \overline{g} ) d\overline{g}
    = 1
  \end{align}
  by $\| \phi_1 \|_{p_1} = \| \phi_2 \|_{p_2} = 1$ and Example \ref{ex:subgroup-represent} \ref{item:subgroup-represent-phi1} \ref{item:subgroup-represent-phi2-g'}, we obtain \eqref{eq:conclusion}.
\end{proof}

\section{Proof of Corollary \ref{cor:Young-non-compact-dimension}}
\label{sec:non-compact-dimension}

In this section, we show Corollary \ref{cor:Young-non-compact-dimension} by using Theorem \ref{thm:Young-order-reversing} and the argument of Jing--Tran--Zhang \cite{MR4616694}.
We write $\mathcal{A}$ for the set of the connected Lie groups $G$ satisfying the assumption in Corollary \ref{cor:Young-non-compact-dimension} (i.e. the center of the semisimple part of $G$ is a finite group).
We note that any connected solvable Lie group is an element of $\mathcal{A}$.
Let $r ( G )$ be the dimension of the maximal compact subgroups for a connected Lie group $G$.
We show the following lemma to prove Corollary \ref{cor:Young-non-compact-dimension}.

\begin{lemma}
  \label{lem:non-compact-dimension-bound}

  Suppose that a real number $d ( G )$ is defined for each $G \in \mathcal{A}$, and
  \begin{align}
    d ( G )
    \geq d ( H ) + d ( G / H ) \label{eq:non-compact-dimension-bound-normal}
  \end{align}
  holds for any connected closed normal subgroup $H \in \mathcal{A}$ of any $G \in \mathcal{A}$ with $G / H \in \mathcal{A}$.
  For $G \in \mathcal{A}$, we denote by $I ( G )$ the inequality
  \begin{align}
    d ( G )
    \geq d ( \mathbb{R} ) \dim G + ( d ( \mathbb{R} / \mathbb{Z} ) - d ( \mathbb{R} ) ) r ( G ).
  \end{align}

  \begin{enumerate}
    \item \label{item:non-compact-dimension-bound-exact}
          Suppose that a connected closed normal subgroup $H \in \mathcal{A}$ of $G \in \mathcal{A}$ satisfies $G / H \in \mathcal{A}$.
          If $I ( H )$ and $I ( G / H )$ hold, then $I ( G )$ also holds.

    \item \label{item:non-compact-dimension-bound-solvable}
          Every non-trivial connected solvable Lie group $G$ satisfies $I ( G )$.

    \item \label{item:non-compact-dimension-bound-order}
          Furthermore, we assume that
          \begin{align}
            d ( \mathbb{R} / \mathbb{Z} ) = 0 \leq d ( H ) \leq d ( G ) \label{eq:non-compact-dimension-bound-order-assume}
          \end{align}
          for any connected closed subgroup $H \in \mathcal{A}$ of any $G \in \mathcal{A}$.
          Then every $G \in \mathcal{A}$ satisfies $I ( G )$, that is,
          \begin{align}
            d ( G )
            \geq d ( \mathbb{R} ) ( \dim G - r ( G ) ). \label{eq:non-compact-dimension-bound-order-simple}
          \end{align}

  \end{enumerate}

\end{lemma}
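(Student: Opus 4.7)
The plan is to handle the three parts in order; part (1) is a direct consequence of the subadditivity \eqref{eq:non-compact-dimension-bound-normal} once the dimensional identities
\begin{align}
\dim G = \dim H + \dim (G/H), \qquad r(G) = r(H) + r(G/H)
\end{align}
are known, the second being the Mostow-type additivity of the non-compact dimension across a connected closed normal extension inside $\mathcal{A}$, obtained by lifting a maximal compact of $G/H$ and pairing it with a maximal compact of $H$. Adding $I(H)$ and $I(G/H)$, substituting these identities, and applying \eqref{eq:non-compact-dimension-bound-normal} then produces $I(G)$.

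For part (2) I would induct on $\dim G$. When $\dim G = 1$ the group $G$ is $\mathbb{R}$ (with $r(G) = 0$) or $\mathbb{R}/\mathbb{Z}$ (with $r(G) = 1$), and $I(G)$ reduces to an equality by direct substitution. For $\dim G \geq 2$, I pick a connected closed normal subgroup $H$ with $0 < \dim H < \dim G$: in the abelian case $G \cong \mathbb{R}^a \times (\mathbb{R}/\mathbb{Z})^b$ one takes a one-parameter closed factor, and in the non-abelian case one takes $H = [G, G]$, which for a connected solvable Lie group is closed, connected, and strictly smaller than $G$. Both $H$ and $G/H$ are connected solvable of strictly smaller dimension (and therefore in $\mathcal{A}$), the inductive hypothesis supplies $I(H)$ and $I(G/H)$, and part (1) yields $I(G)$.

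For part (3), let $R \lhd G$ be the radical; then $R \in \mathcal{A}$ (connected closed solvable) and $G/R \in \mathcal{A}$ (semisimple with finite center, by hypothesis). The inequality $I(R)$ comes from (2) when $R$ is non-trivial, and from the bound $d(\{e\}) \geq 0$ in \eqref{eq:non-compact-dimension-bound-order-assume} when $R$ is trivial. For $I(G/R)$, the Iwasawa decomposition $G/R = K A N$ furnishes a connected closed simply connected solvable subgroup $AN$ of $G/R$ with $r(AN) = 0$ and $\dim(AN) = \dim(G/R) - r(G/R)$; applying (2) to $AN$ gives $d(AN) \geq d(\mathbb{R}) \dim(AN)$, and the monotonicity $d(AN) \leq d(G/R)$ of \eqref{eq:non-compact-dimension-bound-order-assume} combined with the assumption $d(\mathbb{R}/\mathbb{Z}) = 0$ yields $I(G/R)$. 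A final application of (1) to $R \lhd G$ then produces $I(G)$, which is \eqref{eq:non-compact-dimension-bound-order-simple}.

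The main obstacle is the identity $r(G) = r(H) + r(G/H)$ used in (1): the inequality $\leq$ is immediate by projecting a maximal compact of $G$ into $G/H$, but the matching $\geq$ relies on the Mostow-type structural theorem that maximal compact subgroups behave well under closed normal extensions in $\mathcal{A}$. Once this structural input is in hand, the remainder of the argument is a dimensional accounting followed by a single use of Iwasawa.
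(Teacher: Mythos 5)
Your part (1) matches the paper's argument, and your part (3) is correct but takes a genuinely shorter route than the paper: rather than inducting on $\dim G$ and splitting into cases according to whether $G$ admits a suitable proper connected closed normal subgroup, you split off the radical once and for all, obtaining $I(R)$ from part (2) and $I(G/R)$ by combining the Iwasawa subgroup $AN$ with the monotonicity in \eqref{eq:non-compact-dimension-bound-order-assume}. This works (you should just note the degenerate cases where $R$ or $AN$ is trivial, which reduce to $d\geq 0$) and removes the induction from part (3) entirely; the paper reserves the Iwasawa step for the case where no suitable normal subgroup exists.

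The genuine gap is in part (2), in the non-abelian case: the commutator subgroup $[G,G]$ of a connected solvable Lie group need \emph{not} be closed, so you cannot take $H=[G,G]$. (It is closed when $G$ is simply connected, but not in general.) A standard counterexample: let $\tilde{G}=H_{3}\times\mathbb{R}$, where $H_{3}$ is the three-dimensional Heisenberg group with center $Z\cong\mathbb{R}$, and let $\Lambda$ be the rank-two lattice in the center $Z\times\mathbb{R}\cong\mathbb{R}^{2}$ generated by $(1,\alpha)$ and $(0,1)$ with $\alpha$ irrational. Then $G:=\tilde{G}/\Lambda$ is a connected nilpotent Lie group, and $[G,G]$ is the image of $Z\times\{0\}$, a dense one-parameter winding of the torus $(Z\times\mathbb{R})/\Lambda$, hence not closed. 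For such a $G$ your induction step has no valid $H$: a non-closed $[G,G]$ is not a connected closed normal subgroup, $G/[G,G]$ is not Hausdorff, and \eqref{eq:non-compact-dimension-bound-normal} cannot be invoked. The paper avoids this by using the existence, for every connected solvable Lie group of dimension at least $2$, of a connected \emph{closed} solvable normal subgroup $H$ with $\dim(G/H)=1$ (Lemma \ref{lem:non-compact-dimension-deduce} \ref{item:non-compact-dimension-deduce-exist-solvable}); replacing your $[G,G]$ by such an $H$ (or by the closure $\overline{[G,G]}$, together with an argument that this closure is still a proper subgroup) repairs the step, and the rest of your induction then goes through.
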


Jing--Tran--Zhang generalized the Brunn--Minkowski inequality to any Lie group by essentially using Lemma \ref{lem:non-compact-dimension-bound} \cite[Theorem 1.1]{MR4616694}.
Now we give some examples of Lemma \ref{lem:non-compact-dimension-bound} \ref{item:non-compact-dimension-bound-solvable}.

\begin{example}
  \label{ex:non-compact-dimension-bound-apply}

  \begin{enumerate}
    \item \label{item:non-compact-dimension-bound-apply-equal}
          If the equality of \eqref{eq:non-compact-dimension-bound-normal} holds for any connected closed normal subgroup $H \in \mathcal{A}$ of any $G \in \mathcal{A}$, then the equality of $I ( G )$ also holds for any connected solvable Lie group $G$.
          Actually, if $G$ is not trivial, then this claim can be shown by replacing $d ( G )$ with $- d ( G )$ and applying Lemma \ref{lem:non-compact-dimension-bound} \ref{item:non-compact-dimension-bound-solvable}.
          If $G$ is trivial, we have $d ( G ) = d ( G ) + d ( G )$ because the equality of \eqref{eq:non-compact-dimension-bound-normal} holds.
          Thus, we have $d ( G ) = r ( G ) = 0$ and hence the equality of $I ( G )$ holds.

    \item \label{item:non-compact-dimension-bound-apply-Nielsen}
          Let $d ( G ) := \mathrm{rank} ( \ker ( \tilde{G} \to G ) )$ for connected Lie group $G$, where $\tilde{G}$ is the universal covering of $G$.
          Now we show $d ( G ) = r ( G )$ for any connected solvable Lie group $G$.
          The kernel $\ker ( \tilde{G} \to G )$ is isomorphic to the fundamental group $\pi_1 ( G )$ of $G$ \cite[Theorem 9.5.4]{MR3025417}.
          Since $\pi_1 ( H ) \to \pi_1 ( G )$ is injective and $\pi_1 ( G ) / \pi_1 ( H )$ is isomorphic to $\pi_1 ( G / H )$ for any $G \in \mathcal{A}$ and any connected closed normal subgroup $H \in \mathcal{A}$ of $G$ \cite[Remark 11.1.17]{MR3025417}, we have
          \begin{align}
            d ( G )
            = \mathrm{rank} ( \pi_1 ( G ) )
            = \mathrm{rank} ( \pi_1 ( H ) ) + \mathrm{rank} ( \pi_1 ( G / H ) )
            = d ( H ) + d ( G / H ).
          \end{align}
          Thus, the equality of $I ( G )$ holds for any connected solvable Lie group by \ref{item:non-compact-dimension-bound-apply-equal}.
          In this case, we obtain
          \begin{align}
            d ( G )
            = d ( \mathbb{R} ) \dim G + ( d ( \mathbb{R} / \mathbb{Z} ) - d ( \mathbb{R} ) ) r ( G )
            = r ( G )
          \end{align}
          by $d ( \mathbb{R} ) = 0$ and $d ( \mathbb{R} / \mathbb{Z} ) = 1$.

  \end{enumerate}

\end{example}

Now we show the following lemma to prove Lemma \ref{lem:non-compact-dimension-bound}.

\begin{lemma}
  \label{lem:non-compact-dimension-deduce}

  Let $G$ be a connected Lie group.

  \begin{enumerate}
    \item \label{item:non-compact-dimension-deduce-normal}
          One has $r ( G ) = r ( H ) + r ( G / H )$ for any closed normal subgroup $H \lhd G$.

    \item \label{item:non-compact-dimension-deduce-exist-solvable}
          Every solvable Lie group $G$ with $\dim G \geq 2$ satisfies the following condition \ref{item:non-compact-dimension-deduce-exist-solvable-condition}.

          \begin{enumerate}
            \item \label{item:non-compact-dimension-deduce-exist-solvable-condition}
                  There exists a closed normal subgroup $H \in \mathcal{A}$ of $G$ such that $G / H \in \mathcal{A}$ and $1 \leq \dim H < \dim G$.
          \end{enumerate}

    \item \label{item:non-compact-dimension-deduce-exist}
          Suppose that $G \in \mathcal{A}$ and $\dim G \geq 2$.
          If $G$ does not satisfy the condition \ref{item:non-compact-dimension-deduce-exist-solvable-condition}, then there exists a closed subgroup $H \in \mathcal{A}$ of $G$ with $\dim H < \dim G$ such that
          \begin{align}
            \dim H - r ( H ) = \dim G - r ( G ). \label{eq:non-compact-dimension-deduce-exist-state}
          \end{align}

  \end{enumerate}

\end{lemma}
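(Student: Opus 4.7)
The plan is to handle the three parts in order, leveraging structure theorems for connected Lie groups: the Cartan--Iwasawa--Malcev theory of maximal compact subgroups, the nilradical of solvable Lie algebras, and the Iwasawa decomposition of semisimple Lie groups with finite center. Part \ref{item:non-compact-dimension-deduce-normal} will be the quickest: fix a maximal compact subgroup $K \subset G$ and invoke the standard fact that $K \cap H$ is a maximal compact subgroup of $H$ and $\pi(K) \cong K/(K \cap H)$ is a maximal compact subgroup of $G/H$, where $\pi \colon G \to G/H$ is the projection; the additivity of dimensions in the short exact sequence $1 \to K \cap H \to K \to K/(K \cap H) \to 1$ then gives $r(G) = r(H) + r(G/H)$.

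For part \ref{item:non-compact-dimension-deduce-exist-solvable}, I would split into cases. If $G$ is abelian, then $G \cong \mathbb{R}^a \times \mathbb{T}^b$ with $a+b \geq 2$, and any one-dimensional coordinate factor ($\mathbb{R}$ or $\mathbb{T}$) works. If $G$ is nilpotent but not abelian, then $0 \neq Z(\mathfrak{g}) \neq \mathfrak{g}$, so $H := Z(G)_0$ is a closed connected normal abelian subgroup with $1 \leq \dim H < \dim G$. If $G$ is solvable but not nilpotent, take $H$ to be the nilradical: its Lie algebra $\mathfrak{n}$ contains $[\mathfrak{g}, \mathfrak{g}]$ (so $\mathfrak{n} \neq 0$ since $G$ is not abelian) and satisfies $\mathfrak{n} \neq \mathfrak{g}$ (since $G$ is not nilpotent), and $G/H$ is abelian because $[\mathfrak{g}, \mathfrak{g}] \subseteq \mathfrak{n}$. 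In each case $H$ and $G/H$ are solvable and hence belong to $\mathcal{A}$.

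For part \ref{item:non-compact-dimension-deduce-exist}, the first step is to show that failing \ref{item:non-compact-dimension-deduce-exist-solvable-condition} forces $G$ to be semisimple. By \ref{item:non-compact-dimension-deduce-exist-solvable}, $G$ cannot be solvable; and if the radical $R$ were nontrivial (hence proper, since $G$ is not solvable), then $R \in \mathcal{A}$ (solvable) and $G/R \in \mathcal{A}$ (semisimple with finite center because $G \in \mathcal{A}$) would make $R$ itself witness \ref{item:non-compact-dimension-deduce-exist-solvable-condition}, a contradiction. Hence $R = \{e\}$ and $G$ is semisimple with finite center, so the Iwasawa decomposition $G = KAN$ is available. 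Setting $H := AN$ gives a closed connected solvable subgroup (hence $H \in \mathcal{A}$) with $\dim H = \dim G - \dim K = \dim G - r(G)$, and $H$ is a simply connected solvable Lie group, hence diffeomorphic to $\mathbb{R}^{\dim H}$ and contractible, forcing its maximal compact subgroup to be trivial so that $r(H) = 0$. This yields \eqref{eq:non-compact-dimension-deduce-exist-state}, and $\dim H < \dim G$ holds because either $G$ is noncompact (so $r(G) \geq 1$, else $G = AN$ would be solvable) or $G$ is compact with $H = \{e\}$ and $\dim G \geq 2$.

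The main obstacle will be part \ref{item:non-compact-dimension-deduce-exist}: one must carefully argue that the obstruction to \ref{item:non-compact-dimension-deduce-exist-solvable-condition} funnels $G$ into the semisimple case, and then identify $AN$ via the Iwasawa decomposition as the ``right'' (typically non-normal) subgroup --- in particular, verifying $r(AN) = 0$ through the contractibility of simply connected solvable Lie groups is the geometric heart of the lemma. Part \ref{item:non-compact-dimension-deduce-normal} is routine once one cites Cartan--Iwasawa--Malcev, and part \ref{item:non-compact-dimension-deduce-exist-solvable} reduces, after the above case analysis, to standard facts about centers and nilradicals of solvable Lie algebras.
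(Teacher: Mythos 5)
Your proposal is correct, and parts (1) and (3) follow essentially the same path as the paper: part (1) cites the Cartan--Iwasawa--Malcev fact that $K \cap H$ and $K/(K\cap H)$ are maximal compact in $H$ and $G/H$ and adds dimensions, and part (3) first rules out a nontrivial radical (which would itself witness the condition in (2)(i)) to reduce to the semisimple case with finite center, then takes $H = AN$ from the Iwasawa decomposition, using that $AN$ is simply connected solvable (hence $r(AN)=0$) and that $K$ is maximal compact. The only real divergence is in part (2): the paper simply picks a connected closed solvable normal subgroup $H \lhd G$ with $\dim(G/H)=1$ (a codimension-one quotient, automatically abelian), whereas you run a three-way case analysis (abelian / nilpotent non-abelian via $Z(G)_0$ / solvable non-nilpotent via the nilradical). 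Both work; the paper's choice is shorter but leans on the existence of a \emph{closed} connected codimension-one normal subgroup, while yours trades that for standard facts about centers of nilpotent algebras and closedness of the nilradical -- and, as a bonus, your chosen $H$ in the non-abelian cases is canonical. One small point worth making explicit in your part (3): in the noncompact semisimple case, $\dim K \geq 1$ follows as you say because $K=\{e\}$ would force $G = AN$ to be solvable, and this is exactly what guarantees $\dim H < \dim G$ there.
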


\begin{proof}

  \begin{enumerate}
    \item
          Let $K \subset G$ be a maximal compact subgroup of $G$.
          Then $K \cap H$ and $K / (K \cap H)$ are maximal compact subgroups of $H$ and $G / H$, respectively \cite[Theorem 14.3.13 (i) (a)]{MR3025417}.
          Thus, we obtain
          \begin{align}
            r ( G )
            = \dim K
            = \dim ( K \cap H ) + \dim ( K / (K \cap H) )
            = r ( H ) + r ( G / H ).
          \end{align}

    \item
          Since $G$ is a connected solvable Lie group, there exists a connected closed solvable normal subgroup $H \lhd G$ such that $\dim ( G / H ) = 1$.
          Then $1 \leq \dim H < \dim G$ holds by $\dim G \geq 2$, and we have
          \begin{align}
            \dim G
            = \dim H + \dim ( G / H ). \label{eq:dimension-sum}
          \end{align}
          Since $G / H$ is abelian by $\dim ( G / H ) = 1$, we have $G / H \in \mathcal{A}$.
          Thus, $G$ satisfies the condition \ref{item:non-compact-dimension-deduce-exist-solvable-condition}.

    \item
          Let $R \lhd G$ be the radical (the largest connected solvable closed normal subgroup) of $G$.
          Since $\dim G \geq 2$ and $G$ does not satisfy the condition \ref{item:non-compact-dimension-deduce-exist-solvable-condition}, $G$ is not a solvable Lie group by \ref{item:non-compact-dimension-deduce-exist-solvable}.
          Thus, we have $\dim R < \dim G$.
          Since $G / R \in \mathcal{A}$ by $G \in \mathcal{A}$, the connected Lie group $G$ is semisimple (i.e. $\dim R = 0$) by the assumption that $G$ does not satisfy the condition \ref{item:non-compact-dimension-deduce-exist-solvable-condition}.
          Let $G = K A N$ be the Iwasawa decomposition.
          Then the closed subgroup $H := A N \subset G$ is a simply connected solvable Lie group \cite[Theorem 6.46]{MR1920389}.
          Thus, we have $\dim H < \dim G$, $H \in \mathcal{A}$, and $r ( H ) = 0$.
          Since $K \subset G$ is a maximal compact subgroup of $G$ by $G \in \mathcal{A}$ \cite[Theorem 6.31 (g)]{MR1920389}, the equality
          \begin{align}
            \dim H - r ( H )
            = \dim G - \dim K
            = \dim G - r ( G )
          \end{align}
          is obtained.
          \qedhere

  \end{enumerate}

\end{proof}

\begin{proof}[Proof of Lemma \ref{lem:non-compact-dimension-bound}]

  \begin{enumerate}
    \item
          Since
          \begin{align}
            d ( G )
            \geq d ( \mathbb{R} ) ( \dim H + \dim ( G / H ) ) + ( d ( \mathbb{R} / \mathbb{Z} ) - d ( \mathbb{R} ) ) ( r ( H ) + r ( G / H ) )
          \end{align}
          by $I ( H )$, $I ( G / H )$, and \eqref{eq:non-compact-dimension-bound-normal}, we obtain $I ( G )$ by \eqref{eq:dimension-sum} and Lemma \ref{lem:non-compact-dimension-deduce} \ref{item:non-compact-dimension-deduce-normal}.

    \item
          We prove it by induction on $\dim G$.
          If $\dim G = 1$, then either $G = \mathbb{R}$ or $G = \mathbb{R} / \mathbb{Z}$ holds.
          We obtain $I ( \mathbb{R} )$ by $r ( \mathbb{R} ) = 0$, and $I ( \mathbb{R} / \mathbb{Z} )$ by $r ( \mathbb{R} / \mathbb{Z} ) = 1$.

          Now we show $I ( G )$ when $\dim G \geq 2$.
          Then $G$ is a solvable Lie group and hence $G$ satisfies the condition \ref{item:non-compact-dimension-deduce-exist-solvable-condition} by Lemma \ref{lem:non-compact-dimension-deduce} \ref{item:non-compact-dimension-deduce-exist-solvable}.
          Then $H$ and $G / H$ are connected solvable Lie groups, we have $I ( H )$ and $I ( G / H )$ by the induction hypothesis.
          Thus, we also have $I ( G )$ by \ref{item:non-compact-dimension-bound-exact}.

    \item
          We prove it by induction on $\dim G$.
          If $\dim G = 0$, then one has $r ( G ) = 0$ and hence
          \begin{align}
            d ( G ) \geq 0 = d ( \mathbb{R} ) ( \dim G - r ( G ) ).
          \end{align}
          If $\dim G = 1$, then $G$ is a solvable Lie group and hence $I ( G )$ follows from \ref{item:non-compact-dimension-bound-solvable}.

          Now we show $I ( G )$ when $\dim G \geq 2$.
          If $G$ satisfies the condition \ref{item:non-compact-dimension-deduce-exist-solvable-condition}, then $I ( G )$ follows from \ref{item:non-compact-dimension-bound-exact}.
          Thus, it suffices to show $I ( G )$ when $G$ does not satisfy the condition \ref{item:non-compact-dimension-deduce-exist-solvable-condition}.
          By Lemma \ref{lem:non-compact-dimension-deduce} \ref{item:non-compact-dimension-deduce-exist}, there exists a closed subgroup $H \in \mathcal{A}$ of $G$ such that $\dim H < \dim G$ and \eqref{eq:non-compact-dimension-deduce-exist-state}.
          We have $I ( H )$ by the induction hypothesis and hence
          \begin{align}
            d ( G )
            \geq d ( H )
            \geq d ( \mathbb{R} ) ( \dim H - r ( H ) )
          \end{align}
          by \eqref{eq:non-compact-dimension-bound-order-assume} and \eqref{eq:non-compact-dimension-bound-order-simple}.
          Thus, we obtain $I ( G )$ by \eqref{eq:non-compact-dimension-deduce-exist-state}.
          \qedhere

  \end{enumerate}

\end{proof}

\begin{proof}[Proof of Corollary \ref{cor:Young-non-compact-dimension}]

  Let $d ( G ) := - \ln ( Y ( p_1 , p_2 ; G ) )$ for locally compact group $G$.
  Then
  \begin{align}
    d ( G )
     & := - \ln ( Y ( p_1 , p_2 ; G ) )                                  \\
     & \geq - \ln ( Y ( p_1 , p_2 ; H ) Y ( p_1 , p_2 ; G / H ) )        \\
     & = - \ln ( Y ( p_1 , p_2 ; H ) ) - \ln ( Y ( p_1 , p_2 ; G / H ) ) \\
     & = d ( H ) + d ( G / H )
  \end{align}
  holds for any connected closed normal subgroup $H \lhd G$ and hence we have \eqref{eq:non-compact-dimension-bound-normal}.
  In addition,
  \begin{align}
    d ( G )
    := - \ln ( Y ( p_1 , p_2 ; G ) )
    \geq - \ln ( Y ( p_1 , p_2 ; H ) )
    = d ( H ) \label{eq:d-order}
  \end{align}
  holds for any closed subgroup $H \subset G$ by Theorem \ref{thm:Young-order-reversing}.
  We have
  \begin{align}
    d ( H )
    := - \ln ( Y ( p_1 , p_2 ; H ) )
    \geq 0 \label{eq:d-positive}
  \end{align}
  by Example \ref{ex:Young-order-reversing-example} \ref{item:Young-order-reversing-example-classic}, and the equality holds for $H := \mathbb{R} / \mathbb{Z}$ by Corollary \ref{cor:Young-R-compare}.
  Since \eqref{eq:non-compact-dimension-bound-order-assume} follows from \eqref{eq:d-order} and \eqref{eq:d-positive}, we have \eqref{eq:non-compact-dimension-bound-order-simple} for $G \in \mathcal{A}$ by Lemma \ref{lem:non-compact-dimension-bound} \ref{item:non-compact-dimension-bound-order}.
  Thus, the inequality
  \begin{align}
    Y ( p_1 , p_2 ; G )
    = e^{- d ( G )}
    \leq e^{- d ( \mathbb{R} ) ( \dim G - r ( G ) )}
    = Y ( p_1 , p_2 ; \mathbb{R} )^{\dim G - r ( G )}
  \end{align}
  is obtained.
\end{proof}

\section*{Acknowledgement}

This work was supported by JSPS KAKENHI Grant Number JP19J22628 and Leading Graduate Course for Frontiers of Mathematical Sciences and Physics (FMSP).
The author would like to thank his advisor Toshiyuki Kobayashi for his support.
The author is also grateful to Yuichiro Tanaka, Toshihisa Kubo, and the anonymous referees for their careful comments.

\printbibliography

\noindent
Takashi Satomi: Graduate School of Mathematical Sciences, The University of Tokyo, 3-8-1 Komaba Meguro-ku Tokyo 153-8914, Japan.

\noindent
E-mail: tsatomi@ms.u-tokyo.ac.jp

\end{document}